\newtheorem*{rep@theorem}{\rep@title}
\newcommand{\newreptheorem}[2]{%
\newenvironment{rep#1}[1]{%
 \def\rep@title{#2 \ref{##1}}%
 \begin{rep@theorem}}%
 {\end{rep@theorem}}}
\newtheorem{proposition}{Proposition}[section]
\newtheorem{theorem}[proposition]{Theorem}
\newtheorem*{theorem*}{Theorem}
\newtheorem*{proposition*}{Proposition}
\newtheorem*{lemma*}{Lemma}
\newtheorem*{corollary*}{Corollary}
\theoremstyle{definition}
\newtheorem{definition}[proposition]{Definition}
\newtheorem{question}[proposition]{Question}
\theoremstyle{remark}
\newtheorem{remark}[proposition]{Remark}
\newcommand{\bdry}{\partial}
\newcommand{\N}{\mathbb{N}}
\newcommand{\Z}{\mathbb{Z}}
\newcommand{\F}{\mathcal{F}}
\newcommand{\C}{\mathcal{C}}
\renewcommand{\int}{\operatorname{int}}
\newcommand{\lk}{\operatorname{lk}}
\newcommand{\Id}{\operatorname{Id}}
\newcommand{\smooth}{\text{sm}}
\renewcommand{\S}{\mathcal{S}}
\newcommand{\ot}{\leftarrow}
\newcommand{\onto}{\twoheadrightarrow}
\newcommand{\pref}[1]{(\ref{#1})}
\begin{document}
\title[Homology spheres and the solvable filtration]{Topological concordance of knots in homology spheres and the solvable filtration.}

\author{Christopher W.\ Davis}
\address{Department of Mathematics, University of Wisconsin--Eau Claire}
\email{daviscw@uwec.edu}
\urladdr{www.uwec.edu/daviscw}
\date{\today}

\subjclass[2000]{57M25}

\begin{abstract}
In 2016 Levine showed that there exists a knot in a homology 3-sphere which is not smoothly concordant to any knot in $S^3$ where one allows concordances in any smooth homology cobordism.  Whether the same is true if one allows topological concordances is not known.  One might hope that such an example might be detected by the powerful filtration of knot concordance introduced by Cochran-Orr-Teichner.  We prove that this is not the case, demonstrating that for any knot in any homology sphere there is a knot in $S^3$ equivalent to the original knot modulo any term of this filtration.  Our results apply equally well to link concordance.  As an application we prove that every winding number $\pm1$ satellite operator acts bijectively on knot concordance, modulo any term of the solvable filtration.
\end{abstract}

\maketitle

\date{\today}

\section{Introduction and statement of main results.}

In \cite{Levine2016}, A.~Levine proved the surprising result that there exist knots in homology spheres which are not smoothly concordant to any knot in $S^3$, even if one allows concordances in  smooth homology cobordisms.  Doing so answered a question of Matsumoto \cite[Problem 1.31]{KirbyList}.  In this paper we consider the topological version of the same question.  We find that every knot in a homology sphere appears to be topologically concordant to some knot in $S^3$, at least to the eyes of the powerful solvable filtration due to Cochran-Orr-Teichner \cite{COT03}.   Our techniques and results apply equally well for links in homology spheres.  Levine \cite{Levine2016} also proved that there exist winding number one satellite operators, as exemplified in Figure \ref{fig:satelliteoperation} which are not bijective as maps on smooth knot concordance.  As an application, we join the main results of this paper with ideas of Ray and the author \cite{DaRa2017} to prove that modulo any term of the solvable filtration every winding number one satellite operator is bijective.

Two knots $K$ and $J $ in $S^3$ are called \emph{topologically concordant} (or just \emph{concordant}) if $K\times\{1\}$ and $J\times \{0\}$ cobound a locally flat properly embedded annulus in $S^3\times[0,1]$.  Concordance gives an equivalence relation on the set of knots in $S^3$ and we denote by $\C$ the quotient by this relation.  A knot concordant to the unknot is called \emph{slice}.   By capping the unknot with a disk it bounds we see that a knot is slice if and only if it bounds a locally flat embedded disk in $B^4$, called a \emph{slice disk}.  In the case that the annulus or disk above happens to be smooth we instead say \emph{smoothly concordant} or \emph{smoothly slice}.  The quotient of knots by smooth concordance is denoted $\C_\smooth$.

There is a natural extension of  concordance to the set of pairs $(M,K)$ with $M$ a homology sphere and $K$ a knot in $M$.  To be precise, $(M,K)$ is \emph{homology concordant} to $(N,J)$ if there is a homology cobordism (not necessarily smooth)  from $M$ to $N$ in which $K$ and $J$ cobound  a locally flat properly embedded annulus.   
A knot which is homology concordant to the unknot is called \emph{homology slice}.  
The quotient of the set of knots in homology spheres by homology concordance is denoted $\widehat\C$.  The quotient $\widehat \C_{\smooth}$ is defined analogously, by adding smoothness to the homology cobordisms and embedded annuli.    There are a natural maps $\Psi:\C\to \widehat{\C}$  and $\Psi_{\smooth}:\C_{\smooth}\to \widehat{\C}_{\smooth}$ given by $K\mapsto(S^3, K)$.  

In the smooth category $\Psi_\smooth$ is definitely not surjective.  Indeed, take a homology sphere $M$ which is not smoothly homology cobordant to $S^3$, so that in particular there does not exist a homology cobordism from $M$ to $S^3$.  For example, $M$ might be the Poincar\'e homology sphere or any other homology sphere with nonzero Rohlin invariant.  See \cite[Definition 5.7.16]{KirbyCalculus} for a brief discussion of the Rohlin invariant.  It follows immediately from the definition of $\widehat\C_{\smooth}$ that for every knot $K$ in $M$, $(M,K)$ is not smoothly homology concordant to any knot in $S^3$ meaning $(M,K)$ is not in the image of $\Psi_\smooth$.  Even more strongly, by work of Levine \cite[Theorem 1.1]{Levine2016} there exists a pair $(M,K)\in \widehat{\C}_\smooth$ for which $M$ is smoothly homology cobordant to $S^3$ and yet $(M,K)$ does not cobound a smooth annulus with any knot in $S^3$ in any smooth homology cobordism.  

The story is quite different (and less complete) in the topological category.  As a consequence of work of Freedman-Quinn \cite[Corollary 9.3C]{FQ} every homology sphere is homology cobordant to $S^3$.   Thus, the easiest obstruction to $\Psi$ being surjective fails.  The aim of this paper is to study this question.

\begin{question}\label{quest:PsiOnto}
Given a knot $K$ in a homology sphere $M$, does there exist a homology cobordism from $M$ to $S^3$ in which $K$ cobounds a locally flat embedded annulus with some knot in $S^3$?  In other words, is the map $\Psi:{\C} \to \widehat\C$ surjective? 
\end{question}

In \cite{COT03} Cochran-Orr-Teichner introduced a groundbreaking new structure on $\C$, called the \emph{solvable filtration}.  It amounts to a sequence of nested subgroups 
$$
\dots  \F_{(n+1)}\le \F_{n}\le\dots \F_1\le  \F_0\le \C.$$
  We recall the formal definition in Section \ref{sect:defn}.  Informally a knot lies in $\F_n$ (and is called  $n$-solvable) for $n$ large if that knot bounds a locally flat disk in a 4-manifold which is algebraically highly similar the 4-ball.  It is still open whether $\underset{n=0}{\overset{\infty}{\cap}} \F_n$ consists only of slice knots.  Thus, it is not known if every phenomenon of knot concordance is detected in the quotient $\C/\F_n$ for some $n$.    
  
  The definition of $\F_n$ extends easily to give a filtration of $\widehat{\C}$:
  $$
  \dots  \widehat\F_{(n+1)}\le \widehat\F_{n}\le\dots \widehat\F_1\le  \widehat\F_0\le \widehat\C.$$
  This filtration is compatable with $\F_n$  in that $\Psi[\F_n] = \Psi[\C]\cap \widehat\F_n$ so that the induced map $\Psi:\C/\F_n \to \widehat \C/\widehat\F_n$ is well-defined and injective.   We recall the precise definition in Section \ref{sect:defn} and prove some relevant properties.

If one expects that the topological setting should agree with the smooth, there should exist a knot in a homology sphere which is not homology concordant to any knot in $S^3$.  One may further hope that this is detectable by the solvable filtration, since every other currently known feature of topological concordance is detected by the solvable filtration. That is, for some $n\in \Z_{\ge0}$ one might expect that there is a knot in a homology sphere whose class in $\widehat \C/\widehat \F_n$ is not in the image of $\Psi$.  Out first main result shows that this is not the case: 

   \begin{theorem}\label{thm:mainKnot}
   Let $K$ be a knot in a homology sphere.  Then for every $n\in \Z_{\ge0}$ there exists a knot $K'$ in $S^3$ such that $(M,K)$ is equivalent to $(S^3,K')$ in $\widehat\C/\widehat \F_n$.  Thus, $\Psi:\C/\F_n \to \widehat \C/\widehat\F_n$ is surjective and so bijective.
   \end{theorem}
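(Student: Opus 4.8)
The plan is to establish surjectivity of $\Psi$ directly; bijectivity is then immediate from the injectivity recorded above, and the link case requires no new ideas since nothing below uses that $K$ is connected. Let $(M,K)$ be given. By \cite[Corollary 9.3C]{FQ}, together with Quinn's theorem that compact topological $4$--manifolds admit handle decompositions, $M$ bounds a compact contractible topological $4$--manifold $W$ which we may take to have a single $0$--handle, $g$ one--handles, and $g$ two--handles (contractibility lets one trade away all handles of index $\ge 3$ after stabilising with cancelling pairs). Reading off the boundary, $M$ is obtained from $S^3$ by integral Dehn surgery on a framed link $\mathcal U\cup\mathcal L$, where $\mathcal U$ is the $0$--framed unlink dual to the $1$--handles and $\mathcal L$ is the framed attaching link of the $2$--handles. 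After an isotopy, $K$ may be taken disjoint from all of the surgery solid tori, so that $K$ is simultaneously a knot in $S^3$; thus $(M,K)$ is obtained from a knot in $S^3$ by a finite sequence of Dehn surgeries performed in the complement of $K$, and the whole problem is to absorb these surgeries into the relation defining $\widehat\C/\widehat\F_n$.

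The heart of the argument --- and the step I expect to be the main obstacle --- is to renormalise this surgery description so that each surgery is performed along a curve lying in the $n$--th derived subgroup of the fundamental group of the complement of $K$ together with the not--yet--performed surgery curves. Contractibility of $W$ says exactly that the attaching words of the $2$--handles, read in $\pi_1(\natural_g(S^1\times B^3))=F=\langle u_1,\dots,u_g\rangle$, give a balanced presentation $\langle u_1,\dots,u_g\mid r_1,\dots,r_g\rangle$ of the trivial group. Handle slides and the creation of cancelling handle pairs realise the Andrews--Curtis moves on this presentation, together with their effect on the surgery link; since the abelianised relators form a basis of $\Z^g$ one may first arrange $r_i=u_i c_i$ with $c_i\in F^{(1)}$, and then --- using that the relators normally generate $F$ and that the derived quotients of a presentation of the trivial group are controlled by its (vanishing) homology --- successively push $c_i$ into $F^{(2)},F^{(3)},\dots,F^{(n)}$. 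Cancelling each $1$--handle against the corresponding $2$--handle then exhibits $M$ as surgery on a link whose components, at the stage at which each is surgered, map under the inclusion of the $1$--skeleton into the $n$--th derived subgroup of the relevant knot exterior; with a little more care one also arranges each framing to be $\pm1$, so that a homology sphere is obtained at every intermediate stage and $K$, still disjoint from everything and hence unchanged, becomes a well--defined knot $K'\subset S^3$ at the end. The delicate point is that the derived series, unlike the lower central series, does not transform simply under the quotients and substitutions involved, so keeping the relators deep while performing the cancellations is where the real work lies.

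It remains to check that a single surgery of framing $\pm1$ along a curve $\gamma$ in the $n$--th derived subgroup of the exterior of the knot does not change the class in $\widehat\C/\widehat\F_n$. For this one returns to the definition in Section~\ref{sect:defn}: the trace of the surgery, capped off appropriately, is a $4$--dimensional cobordism rel boundary between consecutive stages, and in it the added second homology is represented by a closed surface assembled from the core of the attached $2$--handle and a Seifert surface for $\gamma$; because $\gamma$ lies in $\pi_1^{(n)}$, the fundamental group of this surface maps into the $n$--th derived subgroup of the complement of the annulus, and together with a geometric dual it provides an $n$--Lagrangian with $n$--duals. Hence consecutive stages agree in $\widehat\C/\widehat\F_n$. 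Composing the $g$ equivalences so obtained with the surgery description of the first paragraph yields $(M,K)=(S^3,K')$ in $\widehat\C/\widehat\F_n$, and the surjectivity --- hence, with the injectivity already noted, the bijectivity --- of $\Psi\colon\C/\F_n\to\widehat\C/\widehat\F_n$ follows.
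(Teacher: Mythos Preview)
There are two genuine gaps. The most concrete is in the last step: the trace of a single $\pm1$--framed $2$--handle on $M\times[0,1]$ has $H_2\cong\Z$ with intersection form $(\pm1)$, so it is not spin and its $H_2$ has odd rank. Definition~\ref{defn:n-solvableEquiv} therefore cannot be satisfied --- there is no room for classes $x_1,y_1,\dots,x_k,y_k$ projecting to a basis of $H_2(W)\cong\Z^{2k}$ with $\lambda_n(x_i,x_j)=0$, and there is no ``geometric dual'' lurking anywhere (a pushoff of the capped core represents the same class and has self--intersection $\pm1$). The paper instead surgers along \emph{pairs} of $0$--framed curves $\alpha_i,\beta_i$ with $\lk(\alpha_i,\beta_i)=1$ (Proposition~\ref{prop: solvable surgery}); the trace then carries a hyperbolic form and the two capped--off surfaces supply the Lagrangian and its dual. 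Second, the step you yourself flag as ``where the real work lies'' --- pushing the $c_i$ into $F^{(n)}$ by Andrews--Curtis moves --- is asserted without argument. For the lower central series such simplifications are classical, but the derived quotients $F^{(m)}/F^{(m+1)}$ of a free group are infinitely generated, and no off--the--shelf lemma lets finitely many AC moves kill a prescribed element there. Even granting that, the depth you arrange is in $F=\pi_1(\natural_g\, S^1\times B^3)$, a group that does not see $K$ at all, whereas what Definition~\ref{defn: solvable surgery} demands is depth in $\pi_1(M\setminus L)$.

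The paper avoids both issues with a purely $3$--dimensional construction. It encloses $L$ in a handlebody $U_0$ and iterates Smythe's lemma (Proposition~\ref{prop:bigger hbody}) to produce a tower $U_0\subseteq\cdots\subseteq U_{n+1}$ with $H_1(M\setminus U_{i+1})\to H_1(M\setminus U_i)$ zero, so that $\pi_1(M\setminus U_{n+1})\subseteq\pi_1(M\setminus L)^{(n)}$ automatically --- no presentation manipulations at all. A crossing--change argument (Proposition~\ref{prop:0-surg for hbody}) then produces $0$--framed pairs $\alpha_i,\beta_i\subset M\setminus U_{n+1}$ whose surgery drops $U_{n+1}$, and hence $L$, into a $3$--ball; the required depth in the derived series and the hyperbolic pairing both come for free from the tower, and Proposition~\ref{prop: in a ball} finishes.
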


   Motivated by this result, we conjecture that $\Psi:\C\to \widehat \C$ is surjective and so the answer to Question \ref{quest:PsiOnto} is yes.  
   
   The techniques we use in this paper apply equally well to links in homology spheres.  In Section \ref{sect:defn} we provide a notion of $n$-solvable concordance of links.  This equivalence relation is compatible with the the solvable filtration of link concordance from \cite{COT03}.  To be precise, in Proposition \ref{prop:compatible} we show that a link is in $\F_n$ if and only if it is $n$-solvably concordant to the unlink.  While we do not explore the relationship in this paper, $n$-solvable concordance closely related to $n$-solvable cobordism of link exteriors, as in \cite{Cha2014}.  In Section \ref{sect:proof} we prove  the second main result of this paper, Theorem \ref{thm:mainLink}.  The notation $\C^\mu$ denotes concordance of $\mu$-component links, $\widehat \C^\mu$ denotes homology concordance of $\mu$-component links in homology spheres, and $\simeq_n$ denotes $n$-solvable concordance.
   
   \begin{theorem}\label{thm:mainLink}
   Let $L$ be a link in a homology sphere.  Then for every $n\in \Z_{\ge0}$ there exists a link $L'$ in $S^3$ such that $(M,L)$ is  $n$-solvably concordant to $(S^3,L')$ .  In other words, $\Psi:\C^\mu/\simeq_n \to \widehat \C^\mu/\simeq_n$ is bijective.
   \end{theorem}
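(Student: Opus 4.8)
The plan is to realize $M$ by Dehn surgery on a framed link in $S^{3}$ placed very favorably with respect to $L$, and then to show that the trace of that surgery, after one stabilization, is an $n$-solvable concordance. So first present $M$ as $S^{3}_{\mathbb{J}}(\mathbf{f})$, surgery on a framed link $\mathbb{J}=\mathbb{J}_{1}\cup\cdots\cup\mathbb{J}_{k}\subset S^{3}$ with unimodular linking matrix $Q$, and isotope $L$ generically into $S^{3}\setminus\nu(\mathbb{J})$. The crucial preliminary move is to isotope $L$ \emph{within $M$}, pushing strands through the surgery solid tori, so that the resulting link -- still called $L$, and regarded as a link $L'\subset S^{3}\setminus\nu(\mathbb{J})\subset S^{3}$ -- has $\operatorname{lk}(\mathbb{J}_{i},L'_{j})=0$ for all $i,j$. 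This can be arranged: since $H_{1}(M)=0$, the class of each component $L'_{j}$ in $H_{1}(S^{3}\setminus\nu(\mathbb{J}))\cong\Z^{k}$ lies in the image of $Q$, while pushing $L'_{j}$ once through the $i$-th surgery torus changes that class by the $i$-th row of $Q$; finitely many such pushes therefore clear all linking numbers. This alters $L'$ as a link in $S^{3}$ but leaves $L$ unchanged as a link in $M$, which is exactly the flexibility the theorem allows.

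With the linking numbers killed, the trace of the surgery is a topological cobordism $V_{0}$ between the zero-surgery manifolds $S^{3}_{L'}$ and $M_{L}$ with $H_{1}(V_{0})\cong\Z^{\mu}$ generated by the meridians of $L$ and with both boundary inclusions isomorphisms on $H_{1}$. The only thing keeping $V_{0}$ from witnessing an $n$-solvable concordance is its intersection form, which contains a copy of $Q$ and so need not be hyperbolic -- e.g.\ for the Poincar\'e homology sphere no surgery description of $M$ has hyperbolic linking matrix. To fix this, set $V:=V_{0}\mathbin{\#}N$, where $N$ is a closed topological $4$-manifold with intersection form $-Q$; such an $N$ exists by Freedman's realization of unimodular forms. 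Then $V$ still relates the two zero-surgeries, $\pi_{1}(V)\cong\pi_{1}(V_{0})$, $H_{1}(V)\cong\Z^{\mu}$, and $H_{2}(V)$ carries the hyperbolic form $Q\oplus(-Q)$ together with an explicit Lagrangian and dual basis -- each basis class represented by a surface built from a $2$-handle core over some $\mathbb{J}_{i}$, a surface $F_{i}\subset S^{3}\setminus\nu(L')$ with $\partial F_{i}=\mathbb{J}_{i}$ (which exists because each $\mathbb{J}_{i}$ is null-homologous in $S^{3}\setminus\nu(L')$), and a surface in $N$, the last contributing nothing since $\pi_{1}(N)=1$.

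The heart of the matter is to arrange that these Lagrangian and dual surfaces have $\pi_{1}$ mapping into $\pi_{1}(V)^{(n)}$; the only obstruction is the behavior of the surfaces $F_{i}$. Two things drive the argument. First, $\pi_{1}(M)$ is perfect (as $H_{1}(M)=0$), so its image in $\pi_{1}(V)$ lies in $\pi_{1}(V)^{(j)}$ for every $j$; thus whatever part of the construction can be made to ``come from the $M$-side'' is automatically as deep in the derived series as we please. Second -- and this is the point at which one genuinely uses that we work only modulo $\widehat{\F}_{n}$, rather than up to honest homology concordance -- there is still a great deal of freedom in the surgery description: handle slides, (de)stabilizations, and further isotopies of $L$ through the surgery tori. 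I would use this freedom, in an $n$-dependent way, to push each surgery curve $\mathbb{J}_{i}$ into $\pi_{1}(S^{3}\setminus\nu(L'))^{(n+1)}$; once that is achieved, a standard derived-series argument produces a bounding surface $F_{i}$ all of whose loops lie in $\pi_{1}(S^{3}\setminus\nu(L'))^{(n)}$, and functoriality of the derived series carries them into $\pi_{1}(V)^{(n)}$, completing the verification. Carrying out this step -- forcing the surgery curves $n$ levels deep into the derived series of the complement of the evolving link $L'$, while keeping everything above under control -- is the main obstacle, and is where I would expect the iterated-infection (``genetic modification'') techniques, and the ideas shared with \cite{DaRa2017}, to be needed.

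Granting such a $V$, it witnesses an $n$-solvable concordance from $(S^{3},L')$ to $(M,L)$, so $\Psi\colon\C^{\mu}/\!\simeq_{n}\ \to\ \widehat{\C}^{\mu}/\!\simeq_{n}$ is surjective; it is injective by the compatibility of the link solvable filtration with $n$-solvable concordance recorded in Section \ref{sect:defn}, hence bijective. Theorem \ref{thm:mainKnot} then follows as the case $\mu=1$, using that for knots $\simeq_{n}$ coincides with $\widehat{\F}_{n}$-equivalence.
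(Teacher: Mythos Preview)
Your outline correctly isolates the crux of the problem --- producing Lagrangian and dual surfaces whose fundamental groups land in $\pi_1(E(C))^{(n)}$ --- but it stops precisely there. You explicitly flag ``forcing the surgery curves $n$ levels deep into the derived series of the complement of the evolving link $L'$'' as the main obstacle and then defer to unspecified ``iterated-infection techniques''. That is the entire content of the theorem; everything before it (killing linking numbers, stabilizing by a Freedman $N$ to hyperbolize the form) is routine. The paper does \emph{not} resolve this step by manipulating the surgery presentation of $M$ from the $S^3$ side, nor by infection. Instead it works inside $M$: starting from a handlebody $U_0\supset L$, it iterates Smythe's engulfing lemma (Proposition~\ref{prop:bigger hbody}) to produce a nested chain $U_0\subset U_1\subset\cdots\subset U_{n+1}$ with $H_1(M\setminus U_{i+1})\to H_1(M\setminus U_i)$ zero, so that $\pi_1(M\setminus U_n)\subset\pi_1(M\setminus L)^{(n)}$. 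The surgery curves $\alpha_i,\beta_i$ are then chosen in $M\setminus U_{n+1}$ (Proposition~\ref{prop:0-surg for hbody}), hence bound surfaces in $M\setminus U_n$, and those surfaces automatically have $\pi_1$ in the $n$-th derived subgroup. This handlebody filtration is the missing idea; without it, or a genuine substitute, your argument is a plan rather than a proof.

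There are also two smaller issues. First, Definition~\ref{defn:n-solvableEquiv} demands a \emph{spin} cobordism; your trace-plus-$N$ construction does not address this (the linking matrix $Q$ need not be even, and even if you arrange that, you must choose $N$ spin and check the connected sum is spin). The paper's cobordism is $M\times[0,1]$ with $0$-framed $2$-handles, which is manifestly spin. Second, your appeal to perfectness of $\pi_1(M)$ is not doing the work you want: the relevant group is $\pi_1$ of the exterior of the concordance (equivalently, of the cobordism between zero-surgeries), into which $\pi_1(M)$ does not naturally map, and your bounding surfaces $F_i$ live on the $S^3$ side anyway.
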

   
   Restricting to the setting of knots, $\C/\F_n = \C/\simeq_n$ and $\widehat\C/\widehat\F_n = \widehat \C/\simeq_n$, as we observe in Proposition \ref{prop:compatible} and Remark \ref{rem:compatible}.
    Thus, Theorem \ref{thm:mainKnot} is an immediate consequence of Theorem \ref{thm:mainLink}.

\begin{figure}[h!]
\begin{center}
\begin{picture}(310,90)
\put(0,10){\includegraphics{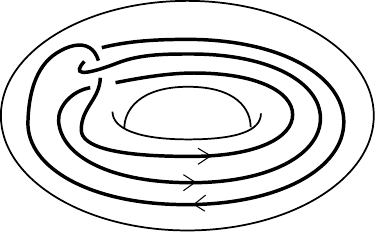}}
\put(50,0){$P$}
\put(120,10){\includegraphics{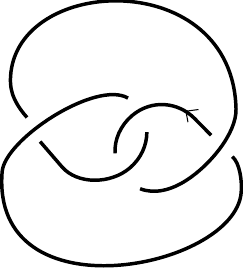}}
\put(150,0){$K$}
\put(210,10){\includegraphics{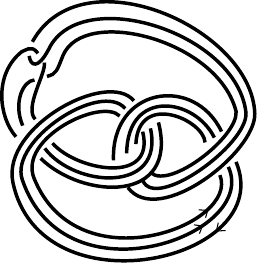}}
\put(235,-0.1){$P(K)$}
\end{picture}
\end{center}
\caption{The satellite construction on knots in $S^3$.  Left to right:  A pattern, a companion knot, the resulting satellite knot}\label{fig:satelliteoperation}
\end{figure}

   A \emph{pattern} $P$ consists of a knot in a solid torus.  Any pattern gives a map on knot concordance, $K\mapsto P(K)$ via the \emph{satellite construction}.  See Figure \ref{fig:satelliteoperation}.   An easy argument \cite[Proposition 3.1]{DaRa2017} reveals that no pattern of winding number different from $\pm1$ can produce a surjective map on $\C$.   In \cite{DaRa2017} Ray and the author define a group $\widehat\S$ which acts on $\widehat \C$ together with a map $E$ from the set of winding number $\pm1$ satellite operators to $\widehat\S$ making the following diagram commute whenever $P$ is a winding number $\pm1$ pattern:
   $$
   \begin{tikzcd}
\C_\Z \arrow{r}{P} \arrow[hook]{d}{\Psi}
& \C_\Z \arrow[hook]{d}{\Psi} \\
\widehat{\C} \arrow[hook, two heads]{r}{E(P)}
& \widehat{\C}.
   \end{tikzcd}
   $$ 
   Here $\C_\Z = \C/\ker(\Psi)$ is the integral knot concordance group.  Since $E(P):\widehat\C\to \widehat\C$ comes from a group action it is bijective. As done in \cite{DaRa2017} it follows from a straightforward diagram chase that $P:\C_\Z\to \C_\Z$ is injective.  A different proof of injectivity appears  in a previous work of Cochran, Ray, and the author \cite{CDR14}.    If $\Psi$ were surjective, then the same diagram chase would give that $P:\C_\Z\to \C_\Z$ is bijective.  This is noteworthy because as a consequence of \cite{Levine2016} there exists a winding number $1$ pattern $P$ for which $K\mapsto P(K)$ is not surjective on smooth integral knot concordance.  In Section \ref{sect:app} we recall the notions of \cite{DaRa2017}, verify that these notions are compatible with the solvable filtration, and use that $\Psi:\C/\F_n\to \widehat \C/\widehat\F_n$ is a bijection by Theorem \ref{thm:mainKnot} to prove the following.
   
   \begin{theorem}\label{thm:application}
   Let $P$ be a winding number $\pm1$ pattern.  Then for every $n\in \Z_{\ge 0}$ the satellite operator $P:\C/\F_n\to \C/\F_n$ is bijective.  
   \end{theorem}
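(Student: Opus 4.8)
The plan is to push the diagram in the introduction through the quotient by the solvable filtration and then chase it, using Theorem~\ref{thm:mainKnot} to upgrade injectivity to bijectivity. First I would verify that all the pieces of the commuting square from \cite{DaRa2017} descend to the filtration quotients. Concretely: (i) the satellite construction on a winding number $\pm1$ pattern $P$ preserves the solvable filtration, so it induces a well-defined map $P\colon \C/\F_n \to \C/\F_n$ for every $n$; (ii) the group $\widehat\S$ of \cite{DaRa2017} and its action on $\widehat\C$ respect the homology-solvable filtration $\widehat\F_n$, so $E(P)$ descends to a bijection $E(P)\colon \widehat\C/\widehat\F_n \to \widehat\C/\widehat\F_n$ (bijectivity being automatic since it comes from a group action); and (iii) the vertical maps $\Psi$ descend to the injections $\Psi\colon \C/\F_n \hookrightarrow \widehat\C/\widehat\F_n$ already recorded in the excerpt. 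These three facts are exactly the ``verify that these notions are compatible with the solvable filtration'' step promised before the theorem statement, and most of the work is bookkeeping: one checks that the constructions used to define $E(P)$ and the $\widehat\S$-action (surgery descriptions, infection-type operations) send an $n$-solvable (concordance) input to an $n$-solvable output, which follows from the standard behaviour of these operations with respect to the $\F_n$-filtration as in \cite{COT03,DaRa2017}.

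Granting (i)--(iii), the square
\[
\begin{tikzcd}
\C/\F_n \arrow{r}{P} \arrow[hook]{d}{\Psi}
& \C/\F_n \arrow[hook]{d}{\Psi} \\
\widehat{\C}/\widehat\F_n \arrow[two heads]{r}{E(P)}
& \widehat{\C}/\widehat\F_n
\end{tikzcd}
\]
commutes, and by Theorem~\ref{thm:mainKnot} the two vertical maps are now bijections, not merely injections. Injectivity of $P$ on $\C/\F_n$ is then the usual diagram chase: if $P[K]=P[K']$ then $\Psi P[K] = \Psi P[K']$, so $E(P)\Psi[K] = E(P)\Psi[K']$, and since $E(P)$ and $\Psi$ are injective we get $[K]=[K']$. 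For surjectivity, take any $[J]\in \C/\F_n$; since $E(P)$ is surjective there is $x\in \widehat\C/\widehat\F_n$ with $E(P)(x) = \Psi[J]$, and since $\Psi$ is now surjective we may write $x = \Psi[K]$ for some $[K]\in \C/\F_n$; then $\Psi P[K] = E(P)\Psi[K] = \Psi[J]$, and injectivity of $\Psi$ gives $P[K] = [J]$. Hence $P\colon \C/\F_n \to \C/\F_n$ is bijective.

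The main obstacle is step (ii) together with the precise compatibility of $E$ with the filtrations, not the diagram chase. One must make sure that ``equivalent modulo $\widehat\F_n$'' is genuinely preserved by the group $\widehat\S$ and by the correspondence $E$ — i.e. that $E(P)$ is not just defined on $\widehat\C$ but really carries $\widehat\F_n$ to $\widehat\F_n$ and descends to a well-defined self-map of the quotient — and that this descended map still fits into the commuting square. This amounts to re-examining the constructions of \cite{DaRa2017} at the level of the filtration: since $\widehat\S$ acts by operations built from concordances and homology cobordisms of the relevant auxiliary manifolds, and these are exactly the moves under which $n$-solvability is stable, the compatibility holds, but it must be checked carefully against the definitions recalled in Section~\ref{sect:defn}. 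Once that is in place, everything else is formal.
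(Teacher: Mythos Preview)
Your proposal is correct and follows essentially the same route as the paper: isolate the compatibility of the $\widehat\S$--action with $\widehat\F_n$ as the one nontrivial step (the paper states this as Proposition~\ref{prop:application}), then combine the resulting commuting square with Theorem~\ref{thm:mainKnot} and the fact that $E(P)$ comes from a group action. The only place where the paper is more concrete than your sketch is in the verification of your step~(ii): rather than appealing to ``standard behaviour of infection-type operations,'' the paper directly glues $X\times[0,1]$ to the exterior $E(C)$ of a given $n$-solvable concordance, uses Mayer--Vietoris to see that $H_*(E(C))\to H_*(E(C'))$ is an isomorphism, and then pushes the $n$-Lagrangians and $n$-duals forward along the lift to the $G^{(n)}$-cover; this is exactly the careful check you flag at the end.
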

   
%
%
%
%
%

   \subsection{Outline of the paper}
   
   In Section \ref{sect:defn} we formally state the definition of homology concordance, the solvable filtration, and solvable concordance of links in homology spheres.  We go on to prove some basic properties of this filtration.  In Section \ref{sect:proof} we give two entirely 3-dimensional results about handlebodies in homology spheres.  We close Section \ref{sect:proof} with the proof of Theorem \ref{thm:mainLink}. In Section \ref{sect:app} we combine Theorem \ref{thm:mainKnot} with the generalized satellite operation of \cite{DaRa2017} in order to prove Theorem \ref{thm:application}.  
   
    A reader looking only to get the core ideas of this paper may read Propositions \ref{prop: in a ball}, \ref{prop: solvable surgery}, \ref{prop:bigger hbody}, and \ref{prop:0-surg for hbody}.  The reader will then be able to follow the proof of Theorem \ref{thm:mainLink} at the end of Section \ref{sect:proof}. 
   
   \subsection{Acknowledgements}
   
   We would like to thank Jae Choon Cha, Daniel Kasprowski, Matthias Nagel, Carolyn Otto, Mark Powell,  Arunima Ray, and Peter Teichner   for helpful conversations.
   \section{homology concordance and the solvable filtration}\label{sect:defn}
   
   In this section we state explicitly the notion of homology concordance and the solvable filtration of knots and links in homology spheres.  Throughout this paper all manifolds are oriented and compact, or are covers of oriented compact manifolds.  All submanifolds are properly embedded and locally flat.  Given a locally flat properly embedded submanifold $F$ in the manifold  $W$, $\nu(F)$ refers to an open tubular neighborhood of $F$ and $E(F) = W-\nu(F)$ is the exterior of $F$.  
   
 For an oriented manifold $N$, $\overline{N}$ denotes the orientation reverse of $N$.   Let $M$ and $N$ be homology 3-spheres.  A 4-manifold $W$ bounded by $M\sqcup \overline{N}$ is called a \emph{homology cobordism} between $M$ and $N$ if the inclusion induced maps $H_*(M)\to H_*(W)\ot H_*(N)$ are isomorphisms.  Let $L=L_1\cup\dots\cup L_\mu$ and $J=J_1\cup\dots\cup J_\mu$ be $\mu$-component links in the homology spheres $M$ and $N$.  We say that $(M,L)$ is  \emph{homology concordant} to $(N,J)$ if there is a homology cobordism $W$ from $M$ to $N$ in which there exist $\mu$ disjoint locally flat properly embedded annuli $C = C_1\cup \dots\cup C_\mu$ with $\bdry C_i$ equal to $L_i\cup r(J_i)$, where $r(J_i)\subseteq \overline N$ is the reverse of $J_i$.  We call $(W,C)$ a \emph{homology concordance} from $(M,L)$ to $(N,J)$.  
 
Just as $\mathcal\C^\mu$ denotes concordance of $\mu$-component links in $S^3$, we denote by $\widehat{\C}^\mu$ the set of homology concordance classes of $\mu$-component links in homology spheres.  When $\mu=1$ we remove it from our notation, so that $\widehat \C$ denotes homology concordance of knots in homology spheres.    
 
    In \cite{COT03}, Cochran-Orr-Teichner introduce a filtration of $\C^\mu$.    Before we can state the definition we need some background.  For any group $G$, the derived series of $G$ is defined recursively by $G^{(0)}=G$, and $G^{(n+1)} = [G^{(n)}, G^{(n)}]$.  If $W$ is a 4-manifold and $G=\pi_1(W)$, then there is an equivariant intersection form 
    $$
    \lambda^W_n:H_2(W;\Z[G/G^{(n)}])\times H_2(W;\Z[G/G^{(n)}])\to \Z[G/G^{(n)}].
    $$
    When $W$ is understood, we leave it out of the notation, just saying $\lambda_n$.  With these notions in hand we can state the definition of the solvable filtration.  
   
   \begin{definition}[Section 8 of \cite{COT03} when $M=S^3$. See also Definition 2.1 of \cite{CocTei04}]\label{defn:solvable}
   Let $L$ be a $\mu$-component link in a homology sphere $M$.  Let $M_L$ denote its zero framed surgery.  We say that $(M,L)$ is \emph{$n$-solvable}, and $(M,L)\in \widehat\F_n^\mu$ if there exists a spin compact 4-manifold  $W$ called an \emph{$n$-solution} bounded by $M_L$ such that:
   \begin{enumerate}
   \item $H_1(W)\cong \Z^\mu$ is generated by the meridians of $L$.
   \item \label{solvable classes} There exist classes $x_1, \dots, x_k, y_1, \dots, y_k\in H_2(W;\Z[G/G^{(n)}])$ with $\lambda_n(x_i, y_j) = \delta_{i,j}$ (the Kronecker delta) and $\lambda_n(x_i,x_j)=\lambda_n(y_i,y_j)=0$.  Here $G=\pi_1(W)$.
   \item \label{proj basis}The image of $\{x_i, y_i\}$ in the projection $H_2(W;\Z[G/G^{(n)}])\to H_2(W)$ gives a basis for $H_2(W)\cong \Z^{2k}$.
   \end{enumerate}
  The classes $x_i$ and $y_i$ are called \emph{$n$-Lagrangians} and \emph{$n$-duals} respectively.  
   \end{definition}

   \begin{remark}\label{remark:compatible}  
   Notice that the only difference between the definition presented here and the definition in \cite{COT03} is that the ambient 3-manifold is allowed to be any homology sphere.  Thus, given a link $L$ in $S^3$, $L$ is $n$-solvable (as in \cite{COT03}) if and only if $(S^3, L)\in \widehat\F_n^\mu$.  As a consequence we recover $\F_n^\mu$ as the set of all $\mu$-component links in $S^3$ for which $(S^3,L)\in \widehat\F_n^\mu$.
   \end{remark}
   
Just as with $\C$ and $\widehat \C$, when $\mu=1$ we leave it out of the notation setting $ \F_n =  \F_n^1$ and $ \widehat\F_n =  \widehat\F_n^1$.  Similarly to the group structure on $\C$, $\widehat{\C}$ forms an abelian group under connected sum of pairs:  $(M,K)\#(N,J) = (M\# N, K\# J)$.  The identity element is given by the equivalence class of the unknot in $S^3$ and the inverse of $(M,K)$ is given by $-(M,K) = \left(\overline{M}, r(K)\right)$.  Since $\widehat \F_n\le \widehat \C$, one arrives at an equivalence relation on  $\widehat\C$ by studying the quotient group $\widehat\C/\widehat\F_n$.  In order to apply our techniques to the setting of links we need an analogous equivalence relation on $\widehat \C^\mu$ for which the equivalence class of the unknot is $\widehat \F^\mu_n$.    one could get such an equivalence relation by  studing the solvable filtration of the string link concordance group as in \cite{CHL4} or by considering the exteriors of links as bordered 3-manifolds up to $n$-solvable cobordism as in \cite{Cha2014}.  We prefer to study an equivalent notion which we call $n$-solvable concordance.  

     \begin{definition}\label{defn:n-solvableEquiv}
Let $L$ and $J$ be $\mu$-component links in the homology spheres $M$ and $N$.  We say that $(M,L)$ is \emph{$n$-solvably concordant} to $(N,J)$ if there exists a spin cobordism $W$ from $M$ to $N$ such that:
\begin{enumerate}
\item $H_1(W)=0$, so that $W$ is an $H_1$-cobordism.
\item There exist disjoint locally flat properly embedded annuli $C = C_1\cup\dots\cup C_\mu$ with $\bdry C_i$ equal to $L_i\cup r(J_i)$.  
\item\label{equiv classes} 
There exist classes $x_1, \dots, x_k, y_1, \dots, y_k\in H_2(E(C);\Z[G/G^{(n)}])$ with $\lambda_n(x_i, y_j) = \delta_{i,j}$ and $\lambda_n(x_i,x_j)=\lambda_n(y_i,y_j)=0$.  Here $G=\pi_1(E(C))$.
\item The image of $\{x_i, y_i\}$ in the composition $H_2(E(C);\Z[G/G^{(n)}])\to H_2(E(C))\to H_2(W)$ gives a basis for $H_2(W)\cong \Z^{2k}$.  
\end{enumerate}
We use $\simeq_n$ to denote this equivalence relation.  The pair $(W,C)$ is called an \emph{$n$-solvable concordance}.  
\end{definition}


   The compatibility of $n$-solvable concordance with the solvable filtration of \cite{COT03} comes as no surprise.  Indeed the proof of the following proposition amounts to a direct check of hypotheses.  

\begin{proposition}\label{prop:compatible}
Let $(M,L)$ be a link in a homology sphere.  Then $(M,L)$ is $n$-solvably concordant to the unlink in $S^3$ if and only if $(M,L)\in \widehat \F_n^\mu$.
\end{proposition}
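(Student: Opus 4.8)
The plan is to prove the two implications by explicit cut‑and‑paste constructions, verifying the defining conditions by routine Mayer--Vietoris; this is the link‑theoretic form of the standard dictionary of \cite{COT03} between slice‑disk exteriors and zero‑surgeries. The common principle in both directions is this: if $\Omega$ is a spin $4$‑manifold with $\bdry\Omega=M$ and $H_1(\Omega)=0$ in which $L$ bounds a locally flat $\mu$‑component disk system $\widehat D$ inducing the $0$‑framing on $L$, then $E_\Omega(\widehat D)$ is a spin $4$‑manifold with boundary $M_L$; one then checks that an $n$‑Lagrangian/$n$‑dual structure on $E_\Omega(\widehat D)$ passes faithfully to and from such a structure on the exterior of a concordance to the unlink, so that the existence of $(\Omega,\widehat D)$ as above is equivalent both to $(M,L)\in\widehat\F^\mu_n$ and to $(M,L)\simeq_n(S^3,U)$.

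\emph{From an $n$‑solvable concordance to an $n$‑solution.} Suppose $(W,C)$ is an $n$‑solvable concordance from $(M,L)$ to $(S^3,U)$. Cap the $\overline{S^3}$‑end with $\overline{B^4}$ and cap $U$ with the standard trivial $\mu$‑component disk system $\Delta\subset B^4$; this produces a spin $4$‑manifold $\widehat W=W\cup_{S^3}\overline{B^4}$ with $\bdry\widehat W=M$, $H_1(\widehat W)=0$, and a disk system $\widehat D=C\cup\Delta$ with $\bdry\widehat D=L$. Since $\Delta$ induces the $0$‑framing on $U$ and concordances carry framings along their length, $V:=E_{\widehat W}(\widehat D)$ has $\bdry V=M_L$. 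The crucial observation is that $V$ is obtained from $E_W(C)$ by gluing on $E_{B^4}(\Delta)\cong\natural^\mu(S^1\times D^3)$ along $E_{S^3}(U)\cong\natural^\mu(S^1\times D^2)$, and these two link exteriors are both aspherical with $\pi_1$ freely generated by the meridians of $U$, so the gluing is along a homotopy equivalence. Hence $\pi_1(E_W(C))\xrightarrow{\ \sim\ }\pi_1(V)$, and Mayer--Vietoris with $\Z$‑ and $\Z[G/G^{(n)}]$‑coefficients ($G$ the common group) gives $H_*(E_W(C))\xrightarrow{\ \sim\ }H_*(V)$ compatibly with $\lambda_n$ and with the inclusions into $\widehat W$. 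Pushing the $n$‑Lagrangians $x_i$, $n$‑duals $y_i$ and their $\lambda_n$‑data to $V$ gives condition \pref{solvable classes} of Definition \ref{defn:solvable}. For the rest, since the images of the $x_i,y_i$ already span $H_2(W;\Z)$, the map $H_2(E_W(C);\Z)\to H_2(W;\Z)$ is onto, so every $2$‑cycle of $W$ can be pushed off $C$, whence the intersection‑with‑$C_i$ homomorphism $H_2(W)\to\Z^\mu$ vanishes; the exact sequence of $(\widehat W,E_{\widehat W}(\widehat D))$ then yields $H_1(V)\cong\Z^\mu$ generated by the meridians of $L$ and $H_2(V)\cong H_2(\widehat W)=H_2(W)\cong\Z^{2k}$ with $\{x_i,y_i\}$ mapping to a basis, i.e.\ condition \pref{proj basis}. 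As $V$ has tangent bundle restricted from the spin manifold $\widehat W$, it is spin. Thus $(M,L)\in\widehat\F^\mu_n$.

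\emph{From an $n$‑solution to an $n$‑solvable concordance.} Conversely, let $V$ be an $n$‑solution, $\bdry V=M_L$. Attach to the bottom of $V$ the trace $X$ of zero‑framed surgery on $L$, namely $M\times[0,1]$ with a zero‑framed $2$‑handle along each $L_i\times\{1\}$; then $\Omega:=X\cup_{M_L}V$ is a spin $4$‑manifold with $\bdry\Omega=M$ and $H_1(\Omega)=0$ (a Mayer--Vietoris count, using $H_1(M)=0$ and that the meridian generators of $H_1(M_L)$ die in $H_1(V)$), and the cores of the new handles together with the product annuli $L\times[0,1]$ assemble into a disk system $\widehat D\subset X\subset\Omega$ with $\bdry\widehat D=L$. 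After an isotopy gathering a small subdisk of each $\widehat D_i$ into a common $4$‑ball $B\subset\operatorname{int}\Omega$ as the trivial disk system, $W':=\Omega\setminus\operatorname{int}B$ is a spin cobordism from $M$ to $S^3$ in which $C:=\widehat D\setminus\operatorname{int}B$ is a $\mu$‑component annulus system from $L$ to the unlink $U\subset S^3$. Reading $X$ upside down as the trace of surgery on the surgery‑dual link in $M_L$, the disks $\widehat D$ are precisely its cocores, so $X\setminus\nu(\widehat D)$ deformation retracts onto $M_L$; hence $E_\Omega(\widehat D)\simeq V$, and $E_{W'}(C)$ is obtained from $E_\Omega(\widehat D)$ by deleting a copy of $\natural^\mu(S^1\times D^3)$ glued along a $\natural^\mu(S^1\times D^2)$ in its boundary --- again a homotopy‑equivalence gluing. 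Thus $\pi_1(E_{W'}(C))\cong\pi_1(V)$ and $H_*(E_{W'}(C);-)\cong H_*(V;-)$ for all coefficients, compatibly with $\lambda_n$ and with the map to $H_2$ of the ambient manifold (the $n$‑solution data of $V$ lands in the $V$‑summand of $H_2(W')$). Transporting the $n$‑Lagrangians, $n$‑duals and $H_2$‑basis of $V$ verifies condition \pref{equiv classes} and the basis condition of Definition \ref{defn:n-solvableEquiv} for $(W',C)$, so $(M,L)\simeq_n(S^3,U)$.

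\emph{Where the work is.} The constructions themselves are short; the substance is the bookkeeping that makes them faithful: checking that the disk and annulus systems carry exactly the $0$‑framing (so the disk exteriors genuinely have boundary $M_L$, resp.\ genuinely interpolate between $M$ and $S^3$); checking that none of the gluings or handle manipulations alters $\pi_1$ or the twisted second homology, so that the $n$‑Lagrangians, the $n$‑duals, and their hyperbolicity under $\lambda_n$ all survive; and reconciling the two slightly different ``basis'' requirements --- a basis for $H_2$ of the ambient cobordism in Definition \ref{defn:n-solvableEquiv} versus for $H_2$ of the $n$‑solution in Definition \ref{defn:solvable} --- which, as above, reduces to the vanishing of an intersection‑with‑$C$ homomorphism. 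The clean way to organize the homological steps is the single observation that the $4$‑manifolds in play differ only by attaching copies of $\natural^\mu(S^1\times D^3)$ and thickened product regions along homotopy equivalences; spinness is immediate throughout since every tangent bundle in sight is restricted from a spin manifold.
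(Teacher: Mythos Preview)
Your proof is correct and follows essentially the same cut-and-paste construction as the paper: in one direction cap the $S^3$ end of an $n$-solvable concordance with $B^4$ and the unlink with trivial disks, then take the disk exterior; in the other direction attach to an $n$-solution the trace of $0$-surgery on $L$ (equivalently, $2$-handles along the meridians), find the cocore disks, and delete a small $4$-ball meeting them standardly. The only differences are cosmetic: where the paper observes directly that the $n$-solution and the concordance exterior differ by $(\mu-1)$ $3$-handles (so $\pi_1$ is unchanged and $H_2$ surjects), you phrase the same comparison as gluing $\natural^\mu(S^1\times D^3)$ along a homotopy-equivalent $\natural^\mu(S^1\times D^2)$; and you supply the verification that $H_1$ of the disk exterior is $\Z^\mu$ on meridians (via the vanishing intersection-with-$C$ map), which the paper leaves as ``a reasonably direct check of hypotheses.''
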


\begin{remark}\label{rem:compatible}
Both $\widehat \C/\simeq_n$ and $\widehat \C/\widehat \F_n$ give group quotients of $\widehat \C$.   Proposition \ref{prop:compatible} shows that the kernels of the maps from $\widehat\C$ to these quotients agree, and so these quotients are equal.  
\end{remark}
\begin{proof}

  Suppose that $(M,L)$ is $n$-solvable and let $W_0$ be an $n$-solution.  Now, $\bdry W_0 = M_L$ is the $0$-surgery on $L$.   Let $W_1$ be given by by adding to $W_0$ a 2-handle to the $0$-framing on a meridian of each component of $L$.  As these meridians form a basis for $H_1(W_0)$, it follows that $H_1(W_1)=0$, and $H_2(W_0)\to H_2(W_1)$ is an isomorphism.  These meridians give helper circles which cancel with the 0-surgery on the components of $L$, so that $\bdry W_1 = M$.  In $W_1$ the components of $L$ bound the co-cores of the added 2-handles.  Call these co-cores $\Delta_1,\dots, \Delta_\mu$ and let $\Delta = \Delta_1\cup\dots\cup \Delta_\mu$.  Let $p_i$ be a point interior to $\Delta_i$ and $B_i\subseteq \nu(\Delta_i)\subseteq W_1$ be a small open 4-ball intersecting $\Delta_i$ in a disk containing $p_i$.  Take $\alpha_i$ to be an arc running from $B_i$ to $B_{i+1}$ disjoint from $\Delta$ and let $B$ be the result of tubing the various $B_i$ together along the arcs $\alpha_i$ to get a single 4-ball.  Let $W=W_1-B$, $C_i=\Delta_i- B$ and $C=C_1\cup\dots\cup C_\mu$.  As the removal of a 4-ball interior to $W_1$ does not change first homology, $H_1(W)\cong H_1(W_1)=0$ so that $W$ is an $H_1$-cobordism from $M$ to $S^3$.  The locally flat properly embedded annulus $C_i$ is bounded by $L_i$ and the corresponding component of the unlink in $S^3$.  It remains to check that $(W,C)$ is an $n$-solvable concordance.  
  

  Notice that  $E(C)$ could be constructed from $W_1$ by first cutting out a neighborhood of $\Delta$ and then cutting out neighborhoods of the arcs $\alpha_i$.  Removing a neighborhood of $\Delta$ from $W_1$ recovers the $n$-solution $W_0$.  Thus, $E(C)$ is the result removing neighborhoods of the properly embedded arcs $\alpha_1,\dots,\alpha_{\mu-1}$ from $W_0$. In other words, $W_0$ is $E(C)$  together with $(\mu-1)$ 3-handles.  Since addition of 3-handles does not change fundamental group, the inclusion induced map $\iota_*:\pi_1(E(C))\to \pi_1(W_0)$ is an isomorphism.   As $\iota_*$ gives a preferred isomorphism between these two groups we call each of them $G$.  Again since these spaces are related by 3-handle addition  $\iota_*:H_2(E(C))\onto H_2(W_0)$ is a surjection with coefficients in either $\Z$ or $\Z[G/G^{(n)}]$.

   Let $x_1, y_1,\dots x_k, y_k$ in $H_2(W_0; \Z[G/G^{(n)}])$ be the $n$-Lagrangians and $n$-duals assumed to exist in Definition \ref{defn:solvable} since $W$ is an $n$-solution.   Let $x_i'$ and $y_i'$ be a choice of preimages of $x_i$ and $y_i$ under $\iota_*:H_2(E(C);\Z[G/G^{(n)}])\onto H_2(E(C);\Z[G/G^{(n)}])$.  The functoriality of the intersection form implies that $\lambda_n^{E(C)}(x_i', y_i') = \lambda_n^{W_0}(x_i, y_i) = \delta_{i,j}$.  Similarly, $\lambda_n^{E(C)}(x_i', x_i') =\lambda_n^{E(C)}(y_i', y_i')=0$ so that Condition \pref{equiv classes} of Definition \ref{defn:n-solvableEquiv} is satisfied.  By Condition \pref{proj basis} of Definition \ref{defn:solvable}, the projection of $\{x_i, y_i\}$ to $H_2(W_0)$ is a basis.   Consider the following commutative diagram whose horizontal maps are induced by inclusion and whose vertical maps are induced by coverings
   $$\begin{tikzcd}
    H_2(E(C); \Z[G/G^{(n)}]) \arrow[ two heads]{rrr}\arrow{d} &&& H_2(W_0; \Z[G/G^{(n)}])\arrow{d}\\    
    H_2(E(C))  \arrow{r} &H_2(W)\arrow["\cong"]{r} & H_2(W_1)& H_2(W_0).\arrow["\cong"]{l}    
\end{tikzcd}
   $$
   Recall that $H_2(W_0)\cong H_2(W_1)$ since $W_1$ is constructed by attaching 2-handles to a basis for $H_1(W_0)$.   We built $W$ from $W_1$ by removing an interior 4-ball so $H_2(W)\cong H_2(W_1)$.  Since the image of $\{x_i, y_i\}$ in $H_2(W_0;\Z[G/G^{(n)}])\to H_2(W_0)$ gives a basis, the diagram above implies that the image of $\{x_i', y_i'\}$ in the composition $H_2(E(C); \Z[G/G^{(n)}]) \to H_2(E(C)) \to H_2(W)$ gives a basis for $H_2(W)$, completing the proof that $(W,C)$ is an $n$-solvable concordance and $(M,L)$ is $n$-solvably concordant to the unlink.
   
   The proof of the reverse implication is basically the same.  Start with an $n$-solvable concordance $(W_0,C)$ from $(M,L)$ to the unlink in $S^3$.   Cap the $S^3$-boundary component of $W_0$ with a 4-ball to get $W_1$.  Cap the unlink boundary components of $C$ with disks to get a collection of embedded disks $\Delta\subseteq W_1$ bounded by $L$.  A reasonably direct check of hypotheses reveals that the exterior of $\Delta$ in $W_1$ is an $n$-solution for $(M,L)$.  
\end{proof}   
 
      
The remainder of this section is devoted to the proof of a pair of propositions regarding $\widehat \C^\mu/\simeq_n$.     

   \begin{proposition}\label{prop: in a ball}
   Suppose that $(M,L)$ is a $\mu$-component link in a homology sphere and that there exists a closed 3-ball $B\subseteq M$ with $L\subseteq B$.  Then $(M,L)$ is in the image of $\Psi:\C^\mu\to \widehat\C^\mu$ and so is in the image of the induced map $\Psi:\C^\mu/\simeq_n\to \widehat\C^\mu/\simeq_n$.
   \end{proposition}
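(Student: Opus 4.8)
The plan is to use only the following input beyond elementary topology: the fact, recorded in the introduction and deduced from \cite[Corollary 9.3C]{FQ}, that every homology 3-sphere is topologically homology cobordant to $S^3$. Equivalently --- capping off the $S^3$-end of such a cobordism with a 4-ball and running a Mayer--Vietoris computation --- every homology 3-sphere bounds a compact topological 4-manifold with the integral homology of a point.

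So first I would apply this to $M$ to obtain a compact topological 4-manifold $X$ with $\bdry X = M$ and $\widetilde H_*(X;\Z)=0$. Deleting an open locally flat 4-ball from $\int X$ produces a cobordism $W$ from $M$ to $S^3$; a routine Mayer--Vietoris argument gives $\widetilde H_*(W;\Z)\cong \widetilde H_*(S^3;\Z)$ with the inclusion of the new boundary $S^3$ inducing the isomorphism, and then Poincar\'e--Lefschetz duality --- together with the standard fact that the inclusion of one boundary component of a cobordism between homology spheres is a $\Z$-homology isomorphism if and only if the inclusion of the other is --- shows that $M\hookrightarrow W$ is also a $\Z$-homology isomorphism. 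Hence $W$ is a homology cobordism from $M$ to $S^3$.

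Next, since $W$ is connected and its boundary is bicollared, I would choose a locally flat embedding $h\colon B\times[0,1]\hookrightarrow W$ with $h(B\times\{0\})=B\subseteq M$, with $h(B\times\{1\})$ a 3-ball $B'\subseteq S^3$, and with $h(B\times[0,1])\cap\bdry W = h(B\times\{0,1\})$; one obtains such an $h$ by starting from a collar $B\times[0,\varepsilon]\subseteq M\times[0,\varepsilon]\subseteq W$ and dragging the inner face $B\times\{\varepsilon\}$ along an arc in $W$ out to a small 3-ball in $S^3$. Replacing $B$ by a slightly smaller ball if necessary we may assume $L\subseteq\int B$, and restricting $h$ to $L\times[0,1]$ yields $\mu$ disjoint locally flat properly embedded annuli $C=C_1\cup\dots\cup C_\mu\subseteq W$ with $\bdry C_i = L_i \cup h(L_i\times\{1\})$. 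Setting $L' := h(L\times\{1\})\subseteq B'\subseteq S^3$, the pair $(W,C)$ is --- after the routine orientation bookkeeping on the $S^3$-boundary --- a homology concordance from $(M,L)$ to $(S^3,L')$. Thus $[(M,L)]=\Psi([L'])$ in $\widehat\C^\mu$, so $(M,L)$ lies in the image of $\Psi\colon\C^\mu\to\widehat\C^\mu$; consequently its class in $\widehat\C^\mu/\simeq_n$ is the image under the induced map of the class of $L'$ in $\C^\mu/\simeq_n$.

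The only nontrivial ingredient is the Freedman--Quinn fact used to manufacture $X$; once that is in hand the proof is a matter of assembling standard constructions. The point requiring the most care is verifying that $W$ really is a homology cobordism (the Mayer--Vietoris and duality bookkeeping), together with the mild care needed to produce the embedded product region $h(B\times[0,1])$ joining $B\subseteq M$ to a ball in $S^3$ inside $W$.
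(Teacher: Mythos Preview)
Your proof is correct and uses the same key input (Freedman--Quinn) as the paper, but assembles the pieces in a slightly different order. The paper applies Freedman--Quinn directly to the homology 3-ball $M-\int B$ to obtain a homology cobordism rel boundary from $M-\int B$ to $B^3$, and then glues this to $B\times[0,1]$ along $\partial B\times[0,1]$; the resulting homology cobordism from $M$ to $S^3$ already contains $B\times[0,1]$, and hence $L\times[0,1]$, by construction. You instead build a homology cobordism $W$ from $M$ to $S^3$ first (applying Freedman--Quinn to $M$ itself) and then argue that a product $B\times[0,1]$ can be embedded in $W$ after the fact. Both routes are fine; the paper's has the minor advantage that the product region comes for free, so no separate ``dragging'' step is needed. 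Your version requires a short justification of the existence of $h$ (collars of both boundary components, a locally flat arc in $\int W$ joining them, a tubular neighborhood of the arc, and an ambient isotopy matching the fibers of that tube to $B$ at the $M$-end and to a ball in $S^3$ at the other). That step is standard, but it is the one place where your sketch is light on detail.
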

\begin{proof}
Suppose that $(M,L)$ is a link in a homology sphere and that there exists a closed 3-ball $B$ with $L\subseteq B\subseteq M$.  Then $M-\int B$ is a homology 3-ball and by Freedman-Quinn \cite[Corollary 9.3C]{FQ} $M-\int B$ is  homology cobordant to the 3-ball $B^3$.    By gluing $B\times[0,1]$ to a homology cobordism from $M-\int B$  to $B^3$, one obtains a homology cobordism $W$ from $M$ to $S^3$ in which the image of $L\times[0,1]\subseteq B\times[0,1] \subseteq W$ is a concordance between $L$ and a link in $S^3$.  Thus, $(M,L)$ is homology concordant to some link in $S^3$ and so $(M,L)$ is in the image of $\Psi:\C^\mu\to \widehat \C^\mu$.  
\end{proof}

The second property we need is a surgery which preserves solvable concordance.  The conditions on the curves we use to perform surgery are slightly long and we reference them often, so we make the following definition.

\begin{definition}\label{defn: solvable surgery}

   Suppose that $(M,L)$ is a link in a homology sphere.  Let $\{\alpha_i, \beta_i~|~i=1,\dots, k\}$ be a collection of embedded simple closed curves in the exterior of $L$ such that:
   \begin{enumerate}
   \item There are surfaces $A_1, B_1,\dots, A_k,B_k$ embedded in the exterior of $L$ with $\bdry A_i = \alpha_i$ and $\bdry B_i=\beta_i$.
   \item $A_i$ intersects $\beta_i$ transversely in a single point, $B_i$ intersects $\alpha_i$ transversely in a single point, and for all $i\neq j$ $A_i\cap \alpha_j = B_i\cap \alpha_j = A_i\cap \beta_j = B_i\cap \beta_j = \emptyset$.
   \item $\pi_1(A_i)\subseteq \pi_1(M-L)^{(n)}$ and $\pi_1(B_i)\subseteq \pi_1(M-L)^{(n)}$.
   \end{enumerate}
   This collection of curves is called a collection of $n$-\emph{solvable surgery curves} for $L$.  We denote by $M_{\alpha_1,\dots, \alpha_k, \beta_1,\dots, \beta_k}$ the result of performing $0$-surgery along these curves, and by $L_{\alpha_1,\dots, \alpha_k, \beta_1,\dots, \beta_k}$ the image of $L$ in this 3-manifold.  We will often abuse notation  and instead say $M_{\alpha, \beta}$ and $L_{\alpha, \beta}$.  
\end{definition}

   \begin{proposition}\label{prop: solvable surgery}
   Suppose that $(M,L)$ is a link in a homology sphere.  Let $\{\alpha_i, \beta_i~|~i=1,\dots, k\}$ be a collection of $n$-{solvable surgery curves} for $L$.  Then $M_{\alpha, \beta}$ is a homology sphere and $(M,L)$ is $n$-solvably concordant to $(M_{\alpha, \beta},L_{\alpha, \beta})$.  
   \end{proposition}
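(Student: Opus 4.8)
The plan is to build the $n$-solvable concordance from $(M,L)$ to $(M_{\alpha,\beta}, L_{\alpha,\beta})$ by attaching $2k$ round $1$-handles (equivalently, a product cobordism with $2k$ two-handles attached along the curves $\alpha_i,\beta_i$ with framing $0$). Concretely, I would set $W = (M\times[0,1]) \cup (\text{2-handles along }\alpha_i\times\{1\}, \beta_i\times\{1\}\text{ with $0$-framing})$, so that $\bdry W = M \sqcup \overline{M_{\alpha,\beta}}$; since the curves lie in the exterior of $L$, the product annuli $L\times[0,1]$ survive in $W$ and give the required cobounding annuli $C = C_1\cup\cdots\cup C_\mu$ between $L$ and $L_{\alpha,\beta}$.

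First I would check the homological bookkeeping. Each $0$-framed $2$-handle on a nullhomologous curve in the homology sphere $M$ contributes a generator to $H_2$ and kills nothing in $H_1$; so $H_1(W) = 0$ (whence $M_{\alpha,\beta}$ is a homology sphere, being $H_1$-cobordant to $M$) and $H_2(W)\cong\Z^{2k}$, generated by the closed surfaces $\widehat A_i, \widehat B_i$ obtained by capping the surfaces $A_i, B_i$ (which exist by condition (1) of Definition \ref{defn: solvable surgery}) with the cores of the attached $2$-handles along $\alpha_i$ and $\beta_i$. The intersection data in condition (2) — $A_i$ meets $\beta_i$ once, $B_i$ meets $\alpha_i$ once, and all other intersections are empty — translates directly into $\lambda_0(\widehat A_i,\widehat B_j)=\delta_{ij}$ and $\lambda_0(\widehat A_i,\widehat A_j)=\lambda_0(\widehat B_i,\widehat B_j)=0$ in $W$ (the self-intersections vanish because we used the $0$-framing and the surfaces are already embedded). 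I also need $W$ to be spin: $M\times[0,1]$ is spin, and attaching a $2$-handle along a nullhomologous curve with $0$-framing preserves spinness, so this holds.

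Next comes the heart of the matter: promoting these classes from ordinary $H_2$ to $H_2(E(C);\Z[G/G^{(n)}])$ where $G=\pi_1(E(C))$. Here $E(C) = W - \nu(C)$ deformation retracts onto $(M-\nu(L))\times[0,1]$ with the $2k$ two-handles attached along curves in $M-\nu(L)$, so $\pi_1(E(C))$ is the quotient of $\pi_1(M-L)$ by the normal closure of $\langle\alpha_1,\dots,\beta_k\rangle$. The key point is condition (3): $\pi_1(A_i),\pi_1(B_i)\subseteq \pi_1(M-L)^{(n)}$ guarantees that the surfaces $A_i, B_i$, together with the handle cores, lift to the cover of $E(C)$ corresponding to $G^{(n)}$ — because the boundary curves $\alpha_i,\beta_i$ become nullhomotopic in $E(C)$ (they bound the handle cores), their once-capped surfaces are based loops... more carefully, I lift a basepoint-connecting arc and use that $\pi_1$ of each capped surface maps into $G^{(n)}$. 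This produces lifts $x_i,y_i\in H_2(E(C);\Z[G/G^{(n)}])$ of $\widehat A_i,\widehat B_i$, and by functoriality of the equivariant intersection form together with the single-point transverse intersections, $\lambda_n(x_i,y_j)=\delta_{ij}$ and the other products vanish. Finally, the image of $\{x_i,y_i\}$ under $H_2(E(C);\Z[G/G^{(n)}])\to H_2(E(C))\to H_2(W)$ is $\{\widehat A_i,\widehat B_i\}$, which we already showed is a basis of $H_2(W)\cong\Z^{2k}$; this verifies condition (4) of Definition \ref{defn:n-solvableEquiv}. I expect the main obstacle to be the careful verification that condition (3) really does let the capped surfaces lift to the $G^{(n)}$-cover of $E(C)$ — one must track how the arc from the basepoint to $A_i$ behaves and confirm the relevant loops land in $\pi_1(E(C))^{(n)}$ rather than merely in the image of $\pi_1(M-L)^{(n)}$, using that passing to the quotient group $G$ can only shrink the derived subgroups. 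Everything else is the routine handle-calculus bookkeeping sketched above.
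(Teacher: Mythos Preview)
Your approach is essentially the paper's: build $W$ from $M\times[0,1]$ by attaching $0$-framed $2$-handles along the $\alpha_i,\beta_i$, take $C=L\times[0,1]$, cap the surfaces $A_i,B_i$ with handle cores, and lift to the $G^{(n)}$-cover using condition~(3). Your identification of the lifting step as the delicate point, and your observation that the induced map $\pi_1(M-L)\to\pi_1(E(C))$ carries $n$th derived subgroup into $n$th derived subgroup, matches exactly how the paper handles it.

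There is one genuine oversight. You write that condition~(2) says ``$A_i$ meets $\beta_i$ once, $B_i$ meets $\alpha_i$ once, and all other intersections are empty.'' But Definition~\ref{defn: solvable surgery}(2) only controls intersections of the \emph{surfaces} with the \emph{curves}; it says nothing about $A_i\cap A_j$, $A_i\cap B_j$, or $B_i\cap B_j$. If you simply take $A_i,B_i\subseteq M\times\{1\}$ and cap, the closed surfaces $\widehat A_i,\widehat B_j$ may meet geometrically, and those intersections will contribute uncontrolled terms to $\lambda_n$ in the cover (even though $\lambda_0$ is fine, since the algebraic counts are governed by linking numbers). The paper fixes this with a small trick you omitted: choose distinct levels $0<\epsilon_1<\delta_1<\cdots<\epsilon_k<\delta_k<1$ and push each surface into the interior, setting $A_i'=A_i\times\{\epsilon_i\}\cup\alpha_i\times[\epsilon_i,1]$ and $B_i'=B_i\times\{\delta_i\}\cup\beta_i\times[\delta_i,1]$. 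Now the only intersections among the pushed-in surfaces come from the cylinder parts meeting the horizontal parts, and these are exactly governed by condition~(2): $A_i'$ meets $B_i'$ in a single point and everything else is disjoint. After capping, the resulting closed surfaces $X_i,Y_i$ are pairwise disjoint in $E(C)$ except for the single $X_i\cap Y_i$ point, and this geometric disjointness is what survives in every lift to give $\lambda_n(x_i,x_j)=\lambda_n(y_i,y_j)=0$ and $\lambda_n(x_i,y_j)=\delta_{ij}$.

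A smaller point: ``$H_1$-cobordant to $M$'' does not by itself force $H_1(M_{\alpha,\beta})=0$; the paper instead just reads it off the linking--framing matrix $\bigoplus_k\left(\begin{smallmatrix}0&\pm1\\\pm1&0\end{smallmatrix}\right)$, which is the cleanest route.
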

   \begin{proof}
   
   We first justify that $M_{\alpha, \beta}$ is a homology sphere.  This follows since $H_1(M_{\alpha, \beta})$ is presented by the linking-framing matrix for the surgery curves $\alpha_1, \beta_1, \dots, \alpha_k, \beta_k$.  The surfaces $A_i$ and $B_i$ can be used to see that the linking-framing matrix is 
   $\underset{k}{\bigoplus}\left(\begin{array}{cc}0&\pm1\\\pm1&0\end{array}\right)$.  As this matrix presents the trivial group, $H_1(M_{\alpha,\beta})=0$ and $M_{\alpha,\beta}$ is a homology sphere.
   
   Next we construct an $n$-solvable concordance.  Start with $M\times[0,1]$ and add 2-handles to the $0$-framings on $\alpha_i\times\{1\}$ and $\beta_i\times\{1\}$. Call the resulting manifold $W$.  Let $C = C_1\cup\dots\cup C_\mu$ where $C_i$ is the image of $L_i\times[0,1]$ in $W$.  Notice that $\bdry W = M_{\alpha, \beta}\cup \overline M$, and that $\bdry C_i = (L_i)_{\alpha,\beta}\cup r(L_i)$.    We claim that $(W, C)$ is an $n$-solvable concordance.

As we only added 2-handles  along nullhomologous curves, $H_1(W)\cong H_1(M\times[0,1])=0$, so that  $W$ is an $H_1$-cobordism.  Let $A_1,B_1,\dots, A_k, B_k\subseteq M$ be the surfaces assumed to exist since $\alpha, \beta$ are $n$-solvable surgery curves.  
Pick $2k$ distinct numbers $0<\epsilon_1<\delta_1<\dots<\epsilon_k<\delta_k<1$.  Consider the pushed in surfaces $A_i' = A_i\times\{\epsilon_i\}\cup \alpha_i\times[\epsilon_i,1]\subseteq M\times[0,1]$ and $B_i' = B_i\times\{\delta_i\}\cup \beta_i\times[\delta_i,1]\subseteq M\times[0,1]$.  As these surfaces have all been pushed a different distance into $M\times[0,1]$, they are disjoint except that $B_i'$ intersects  $A_i'$ transversely in a single point since $B_i$ intersects $\alpha_i$ in a single point.  
Add to $A_i'$ the core of the 2-handle added along $\alpha_i$ to get a closed surface $X_i\subseteq W$.  Do the same to $B_i'$ to get $Y_i\subseteq W$.   Since these closed oriented embedded surfaces form intersection duals to the co-cores of the added 2-handles, $\{X_1,  Y_1, \dots, X_k, Y_k\}$ forms a basis for $H_2(W)$.  Since the trivialization of the normal bundles of $A_i'$ and of the core of the 2-handle added to $a_i$ both induce the $0$-framing on $\alpha_i$, $X_i$ has a trivial normal bundle.  In particular $X_i$ has a pushoff in $W$ which is disjoint from $X_i$.  The same follows for $Y_i$.    

The diagram below commutes.  The left pointing isomorphisms are induced by the projection $E(K)\times[0,1]\to E(K)$ and all others are induced by inclusion.
$$
\begin{tikzcd}
\pi_1(A_i)\arrow{d}&\pi_1(A_i') \arrow[r,twoheadrightarrow ] \arrow[d] \arrow["\cong"]{l}
& \pi_1(X_i) \arrow[d] 
\\
\pi_1(E(K))&\pi_1(E(K)\times[0,1]) \arrow[r] \arrow["\cong"]{l}
& \pi_1(E(C)).
\end{tikzcd}
$$
By assumption $\pi_1(A_i)\subseteq \pi_1(E(K))^{(n)}$ so that $\pi_1(X_i)\subseteq \pi_1(E(C))^{(n)}$.  Thus, $X_i$ lifts to an embedded surface $\widetilde X_i$ in  $\widetilde {E(C)}_n$, the cover corresponding to $\pi_1(E(C))^{(n)}$.  Since the $X_i$ are all disjoint and each is disjoint from its own pushoff, it follows that for all $i,j=1,\dots, k$ and every $\gamma$ in the deck group of $\widetilde {E(A)}_n$, $\widetilde X_i$ is disjoint from a pushoff of $\gamma\left(\widetilde X_j\right)$.  The same argument reveals that $\widetilde Y_i$ is disjoint from a pushoff of $\gamma \left(\widetilde Y_j\right)$, where $\widetilde Y_i$ is a lift of $Y_i$, and that $\widetilde X_i$ is disjoint from $\gamma\left(\widetilde Y_j\right)$ as long as $i\neq j$.  Since $X_i$ intersects $Y_i$ in a single point these lifts can be chosen so that $\widetilde X_i$ intersects $\widetilde Y_i$ transversely in a single point and is disjoint from every $\gamma \left( \widetilde Y_i\right)$ with $\gamma$ a deck transform not equal the identity.  Let $x_i$ and $y_i$ be the classes of $\widetilde X_i$ and $\widetilde Y_i$ in $H_2(\widetilde {E(C)}_n) = H_2(E(C);\Z[\pi/\pi^{(n)}])$.  Since geometric intersection numbers recover algebraic, we have that $\lambda^{E(C)}_n(x_i, x_j) = \lambda^{E(C)}_n(y_i, y_j) =  0$ and $\lambda^{E(C)}_n(x_i, y_j) = \delta_{i,j}$.  

As $x_i, y_i$ project down to the homology classes of $X_i, Y_i$, and $\{X_i, Y_i\}$ forms a basis for $H_2(W)$, we see that $(W,C)$ is an $n$-solvable concordance from $(M,L)$ to $(M_{\alpha, \beta},L_{\alpha, \beta})$.  This completes the proof.
\end{proof}

\section{The Proof of Theorem \ref{thm:mainLink}}\label{sect:proof}

In this section we prove that every link in a homology sphere is $n$-solvably concordant to some link in $S^3$.  The proof follows quickly from Propositions \ref{prop: in a ball} and \ref{prop: solvable surgery} from the previous section and two purely 3-dimensional results involving embedded handlebodies in homology 3-spheres.  The first is essentially a special case of a result of Smythe \cite[Corollary]{Smythe70}.   

\begin{proposition}\label{prop:bigger hbody}
Let $M$ be a 3-dimensional homology sphere and $U\subseteq M$ be a handlebody embedded in $M$.  Then there exists a handlebody $V\subseteq M$ containing $U$ for which the inclusion induced map $H_1(M-V)\to H_1(M-U)$ is the zero homomorphism.
\end{proposition}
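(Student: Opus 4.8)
The plan is to enlarge $U$ by attaching to it a family of surfaces lying in the exterior $N:=M-\int U$, chosen so that the result is still a handlebody while the leftover region $M-V$ contains only loops which miss those surfaces, and are therefore nullhomologous in $N$.

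First I would record the homological picture; this is where the homology-sphere hypothesis enters. Applying Mayer--Vietoris to $M=U\cup_{\partial U}N$ and using $H_2(M)=H_1(M)=0$, the inclusions give an isomorphism $H_1(\partial U)\cong H_1(U)\oplus H_1(N)$. In particular $H_1(N)\cong\Z^g$ (with $g$ the genus of $U$), and the Lagrangians $\mathcal{L}_{\mathrm{in}}:=\ker\big(H_1(\partial U)\to H_1(U)\big)$ and $\mathcal{L}_{\mathrm{out}}:=\ker\big(H_1(\partial U)\to H_1(N)\big)$ are \emph{complementary}: $H_1(\partial U)=\mathcal{L}_{\mathrm{in}}\oplus\mathcal{L}_{\mathrm{out}}$. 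Here $\mathcal{L}_{\mathrm{in}}$ is spanned by the boundaries of a complete meridian system of $U$, and $\mathcal{L}_{\mathrm{out}}$ is the set of classes of loops on $\partial U$ bounding in $N$. The same computation gives $H_2(N)=0$, so $\partial\colon H_2(N,\partial N)\to H_1(\partial N)$ is injective with image $\mathcal{L}_{\mathrm{out}}$, and by Poincar\'e--Lefschetz duality the intersection pairing $H_1(N)\times H_2(N,\partial N)\to\Z$ is unimodular.

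Next I would choose the surfaces. The handlebody mapping class group of $U$ surjects onto the stabilizer of $\mathcal{L}_{\mathrm{in}}$ in the symplectic group, and that stabilizer acts transitively on the Lagrangians complementary to $\mathcal{L}_{\mathrm{in}}$; so I can find a \emph{handle system} of $U$ realizing $\mathcal{L}_{\mathrm{out}}$ --- disjoint simple closed curves $c_1,\dots,c_g$ on $\partial U$ forming a free basis of $\pi_1(U)$ and dual to some meridian system $D_1,\dots,D_g$ of $U$ (so $c_i\cdot D_j=\delta_{ij}$), with $[c_1],\dots,[c_g]$ a basis of $\mathcal{L}_{\mathrm{out}}$. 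Since each $c_i$ bounds in $N$, it bounds a compact connected orientable surface $\mathcal{F}_i$ properly embedded in $N$ with $\partial\mathcal{F}_i=c_i$; surgering along the circles of $\mathcal{F}_i\cap\mathcal{F}_j$ (and retubing) makes the $\mathcal{F}_i$ pairwise disjoint without changing their boundaries or their classes in $H_2(N,\partial N)$. I then set $V:=U\cup\nu(\mathcal{F}_1)\cup\dots\cup\nu(\mathcal{F}_g)$, each handlebody $\nu(\mathcal{F}_i)$ being attached to $U$ along an annular neighborhood of $c_i$ in $\partial U$. That $V$ is a handlebody follows by cutting along $D_1,\dots,D_g$: cutting $U$ yields a ball, and since $D_i$ meets $c_i$ once and misses the other $c_j$, each $\nu(\mathcal{F}_j)$ is attached along a disk, so the cut-open manifold is a boundary connected sum of handlebodies; re-gluing the $g$ pairs of disks amounts to attaching $g$ one-handles, so $V$ is a handlebody, and it contains $U$. (This handlebody bookkeeping is the content drawn from Smythe's corollary.)

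Finally, $M-V=N-\nu(\mathcal{F}_1\cup\dots\cup\mathcal{F}_g)$, so any loop $\gamma$ in $M-V$ is disjoint from $\mathcal{F}:=\mathcal{F}_1\cup\dots\cup\mathcal{F}_g$ and hence has vanishing intersection number with each $[\mathcal{F}_i]$. Since the $[\mathcal{F}_i]$ form a basis of $H_2(N,\partial N)$ and the pairing with $H_1(N)$ is unimodular, $[\gamma]=0$ in $H_1(N)=H_1(M-U)$, so the inclusion-induced map $H_1(M-V)\to H_1(M-U)$ is the zero homomorphism. The step I expect to be the main obstacle is the first part of the third paragraph: converting the purely homological statement ``$\mathcal{L}_{\mathrm{out}}$ is a complementary Lagrangian'' into the geometric statement that it is realized by an honest handle system of $U$ (so that the $\mathcal{F}_i$ can be glued on without spoiling the handlebody structure). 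That geometric realization, together with the verification that the resulting enlargement is a handlebody, is precisely the three-dimensional input for which Smythe's result is invoked; everything else is homological bookkeeping that works because $M$ is a homology sphere.
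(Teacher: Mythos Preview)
Your argument is correct. It is at bottom the same idea as the paper's proof, but presented in a self-contained, unpacked form rather than by citation. The paper simply invokes Smythe's corollary to produce a handlebody $V\supseteq U$ with $H_1(U)\to H_1(V)$ zero, and then converts this into the desired statement about complements via Alexander duality and the universal coefficient theorem. You instead reconstruct Smythe's $V$ by hand---choosing a handle system realizing $\mathcal{L}_{\mathrm{out}}$, capping each $c_i$ by a surface $\mathcal{F}_i$ in $N$, and thickening---and then bypass the Alexander-duality step by arguing directly with the Poincar\'e--Lefschetz intersection pairing on $N$. The step you flagged as the main obstacle (realizing the complementary Lagrangian $\mathcal{L}_{\mathrm{out}}$ by an honest geometric handle system dual to a meridian system) is indeed the nontrivial 3-dimensional input, and your justification via the surjection of the handlebody mapping class group onto the stabilizer of $\mathcal{L}_{\mathrm{in}}$ in $Sp(2g,\Z)$ is valid, though it does appeal to a classical fact of comparable depth to Smythe's result itself. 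What your route buys is an explicit description of $V$ and of why loops in $M-V$ die in $H_1(M-U)$; what the paper's route buys is brevity.
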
 
\begin{proof}

In \cite[Corollary]{Smythe70}, Smythe proves that if $U$ is a handlebody embedded in a 3-manifold $M$ and $H_1(U)\to H_1(M)$ is the $0$-homomorphism then there exists a handlebody $V$ with $U\subseteq V\subseteq M$ such that $H_1(U)\to H_1(V)$ is the $0$-homomorphism.  Since $H_1(M)=0$, Smythe's result applies, and we see a handlebody V with $U\subseteq V\subseteq M$ and $H_1(U)\to H_1(V)$ is the $0$-homomorphism.  Since $M$ is a homology sphere, Alexander duality 
 implies that $H^1(M-U)\to H^1(M-V)$ is the $0$-homomorphism.  An application of the universal coefficient theorem completes the proof.

\end{proof}

The second 3-dimensional result we shall need gives a means of performing surgery to ``unlink'' an embedded handlebody in a homology sphere from every curve in a surgery presentation for that homology sphere so that the resulting handlebody lies in a 3-ball in the image of surgery.  

\begin{proposition}\label{prop:0-surg for hbody}
Let $M$ be a homology sphere and $U\subseteq M$ be a handlebody embedded in $M$.  Then there exist curves $\alpha_1,\beta_1\dots, \alpha_k,\beta_k$ in $M-U$ such that:
\begin{enumerate}
\item  $\lk(\alpha_i, \alpha_j) =\lk(\alpha_i, \beta_j) = \lk(\beta_i, \beta_j) = 0$ for all $i\neq j$. 
\item $\lk(\alpha_i, \beta_i) = 1$ for all $i$.
\item 
There exists a 3-ball in the result of $0$-surgery $M_{\alpha, \beta}$ which contains the image of $U$.
\end{enumerate}
\end{proposition}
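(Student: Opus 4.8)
The plan is to present $M$ as surgery on a framed link in $S^{3}$ with $U$ sitting in the link exterior, and then to adjoin a further family of $0$-framed curves, in the pattern demanded by (1)--(2), whose surgery ``unlinks'' $U$ from the surgery link and so deposits it inside a $3$-ball.

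First I would choose a framed link $\mathcal{J}=J_{1}\cup\cdots\cup J_{r}\subseteq S^{3}$ with $M=S^{3}_{\mathcal{J}}$, and, by general position, isotope $U$ off the surgery solid tori, so that $U$ becomes a handlebody in $S^{3}-\nu(\mathcal{J})$; fix a spine $\Gamma$ of $U$, so that $U=N(\Gamma)$. The key reduction is that a $3$-ball in $M_{\alpha,\beta}$ containing the image of $U$ is produced as soon as, after the added surgeries, $\Gamma$ can be isotoped into a $3$-ball disjoint from every surgery solid torus: such a ball survives the surgeries, and $N(\Gamma)=U$ lies inside it. Moreover the assertion in (3) that $M_{\alpha,\beta}$ is again a homology sphere is automatic, since a family of curves satisfying (1) and (2) has linking--framing matrix $\bigoplus_{i}\left(\begin{smallmatrix}0&1\\1&0\end{smallmatrix}\right)$, which presents the trivial group; this is exactly the computation carried out in the proof of Proposition~\ref{prop: solvable surgery}.

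The substance of the argument is therefore the construction of the $\alpha_{i},\beta_{i}$. Here one works with a diagram of $\Gamma\cup\mathcal{J}$ and argues by induction: it is enough to change enough of the crossings of $\Gamma$ with $\mathcal{J}$ that $\Gamma$ can be pushed entirely off $\mathcal{J}$ and split away into a ball, while leaving the self-crossings of $\Gamma$ and the crossings internal to $\mathcal{J}$ alone (recall $\Gamma$ is permitted to stay knotted inside its ball, and $M$ must not be altered). The delicate point, and what I expect to be the main obstacle, is that a crossing change is realized naively by $(\pm1)$-surgery on a small circle, i.e.\ by a blow-up, which violates both the $0$-framing hypothesis and the requirement that the surgery cobordism be spin; so instead one must manufacture each such change using only $0$-framed curves with linking number one, routed carefully enough that distinct pairs stay unlinked from one another and that crossings already cleared are not reintroduced. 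Carrying out this Kirby-calculus bookkeeping, and verifying that the resulting $\Gamma$ genuinely splits off into a ball, is the heart of the proof; once it is done, conditions (1)--(3) hold essentially by construction.
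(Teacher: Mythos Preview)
Your outline matches the paper's approach almost exactly: present $M$ as surgery on a framed link $\Gamma\subseteq S^{3}$, view $U$ as a regular neighborhood of a wedge of circles $C$ in the exterior of $\Gamma$, and then add $0$-framed $(\alpha_i,\beta_i)$ pairs with $\lk(\alpha_i,\beta_i)=1$ so that after these surgeries $C$ can be isotoped off $\Gamma$ and into a ball. Your remark that the linking--framing matrix forces $M_{\alpha,\beta}$ to be a homology sphere is also exactly what the paper uses.

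There is one concrete ingredient you are missing, and it is precisely what makes the ``Kirby-calculus bookkeeping'' go through. You phrase the task as changing \emph{individual} crossings between $C$ and $\Gamma$ by $0$-framed pairs rather than by blow-ups. In fact the paper's move (its Figure~\ref{fig:crossing change}) does not change a single crossing: a $0$-framed $\alpha$ together with a linking-$1$ meridional helper $\beta$ changes a \emph{pair} of crossings of $c_i$ over $\gamma_j$ with opposite signs simultaneously (slide $c_i$ over $\alpha$, then $\beta$ becomes a helper circle for $\alpha$ and the pair cancels). For this to exhaust all crossings of $c_i$ over $\gamma_j$, one needs $\lk_{S^{3}}(c_i,\gamma_j)=0$ beforehand, and a priori there is no reason for this to hold. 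The paper handles this with an extra preliminary step you did not mention: since $M$ is a homology sphere, one may first slide the $c_i$ over the surgery components $\gamma_j$ (an isotopy of $C$ in $M$) until every $\lk_{S^{3}}(c_i,\gamma_j)=0$. Only then do the over-crossings pair off and the $(\alpha,\beta)$ trick applies. A pleasant consequence of this particular construction is that each $\alpha_i,\beta_i$ is visibly nullhomologous in the exterior of $\Gamma$, so the required linking conditions (1) and (2) can be read off in $S^{3}$ and hold automatically; the inductive ``distinct pairs stay unlinked'' bookkeeping you worried about is then essentially free, since each new $(\alpha,\beta)$ is localized near its own pair of crossings.
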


\begin{proof}

Let $M$ be a homology sphere and $U\subseteq M$ be a handlebody.  By picking a particular identification of $U$ with an abstract handlebody we may realize $U$ as a regular neighborhood of a wedge of circles $C = c_1\vee c_2\vee\dots\vee c_g$ where $g$ is the genus of $U$.

Realize $M$ as surgery along some framed link $\Gamma:=\gamma_1 \cup \dots\cup \gamma_n$ in $S^3$ and $C$ as a knotted wedge of circles in the exterior of $\Gamma$.  Since $M$ is a homology sphere, we may slide the various $c_i$ over the $\gamma_j$ until we arrive at a diagram of $C$ in $M$ for which $\lk(c_i, \gamma_j) = 0$ for all $i,j$.  These handle slides amount to an isotopy of $C$ in $M$.

Thus, we may pick a diagram for the knotted wedge of circles $C$ in the exterior of $\Gamma$ so that $\lk(c_i, \gamma_j)=0$ for all $i,j$.  This means that for every time $c_i$ crosses over $\gamma_j$, there is another crossing with the opposite sign.  Two such crossings appear in Figure \ref{fig:crossing change} (a).  As depicted  in Figure \ref{fig:crossing change} (b), we perform $0$-surgery on a pair of curves $\alpha$ and $\beta$ nullhomologous in the complement of $\Gamma$ with $\lk(\alpha, \beta)=1$.  In Figure \ref{fig:crossing change} (c) we slide $c_i$ over the $0$-framing of $\alpha$.  The curve $\beta$ is now a helper circle for $\alpha$ so we may cancel these surgery curves, as in Figure \ref{fig:crossing change} (e).  The resulting diagram for $C$ is the same as we started with, except that the two overcrossings we considered are now undercrossings.  
Iterate this procedure until no $c_i$ crosses over any $\gamma_j$.  We have now found a family of curves, $\alpha_1, \beta_1,\dots, \alpha_k, \beta_k$,  each of which is nullhomologous in the exterior of $\Gamma$ so that: 
\begin{enumerate}
\item For all $i\neq j$, $\lk(\alpha_i, \alpha_j) = \lk(\alpha_i, \beta_j) =  \lk(\beta_i, \beta_j) = 0$.
\item For all $i$, $\lk(\alpha_i, \beta_i) = 1$.

\item The result of performing $0$-surgery along these curves sends $C$ to a new wedge of circles, $C_{\alpha, \beta}$ so that every crossing between $C_{\alpha, \beta}$ and every curve in a surgery diagram for $M_{\alpha, \beta}$ is an undercrossing.
\end{enumerate}

\begin{figure}
\setlength{\unitlength}{1pt}
\begin{picture}(400,120)
\put(0,15){\includegraphics[width=.25\textwidth,angle=90]{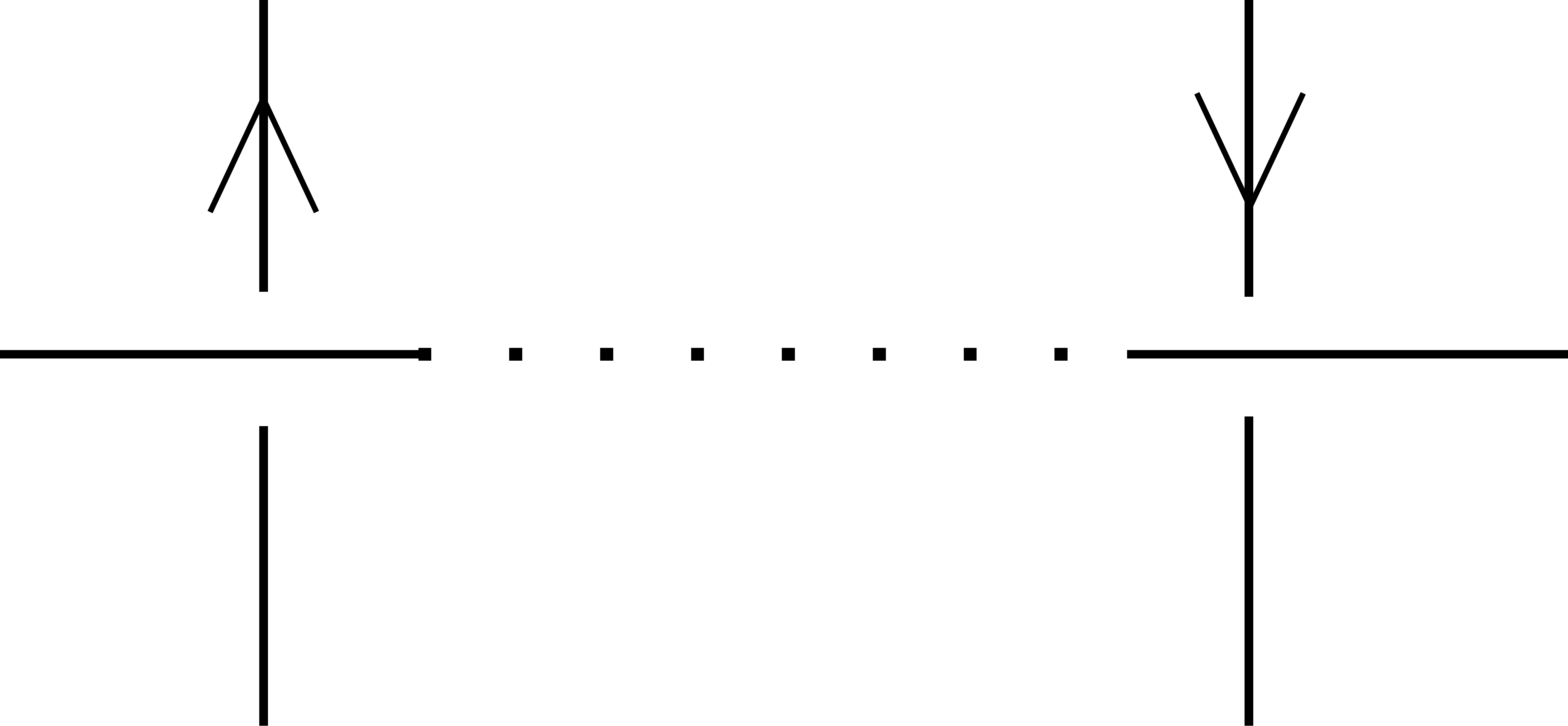}}
\put(15,17){$c_i$}
\put(40,40){$\gamma_j$}
\put(40,100){$\gamma_j$}
\put(25,0){(a)}

\put(70,15){\includegraphics[width=.25\textwidth,angle=90]{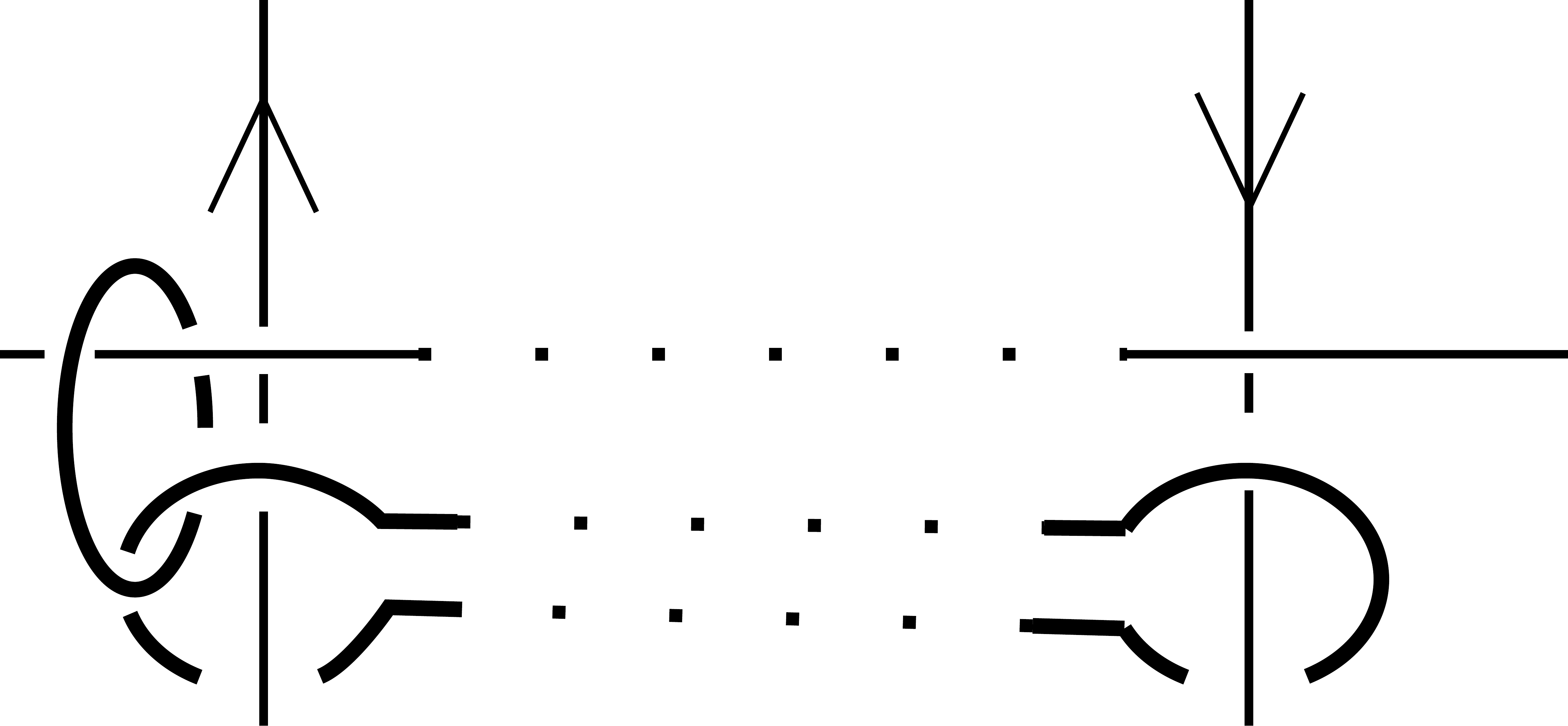}}
\put(117,45){$\alpha$}
\put(80,17){$\beta$}
\put(95,0){(b)}

\put(140,15){\includegraphics[width=.25\textwidth,angle=90]{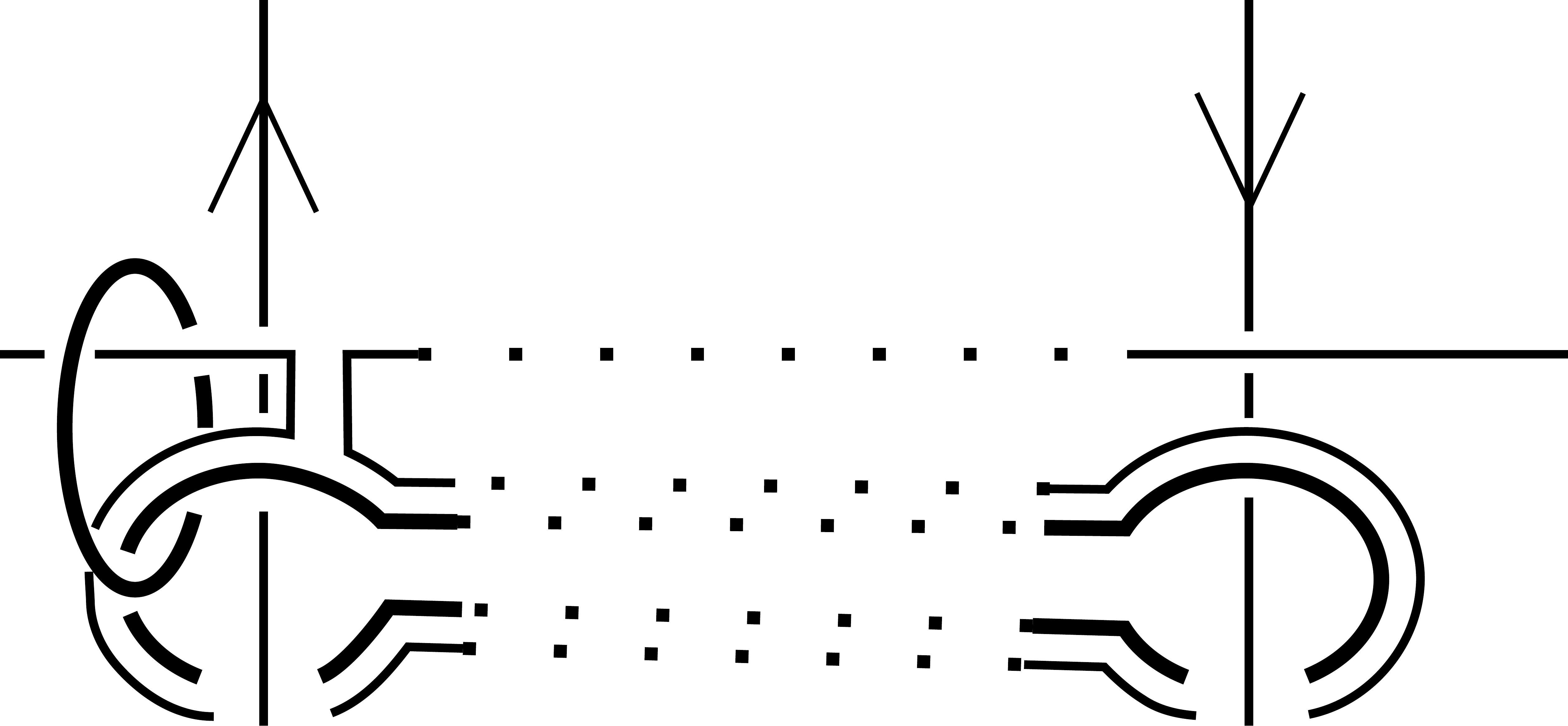}}
\put(165,0){(c)}

\put(210,15){\includegraphics[width=.25\textwidth,angle=90]{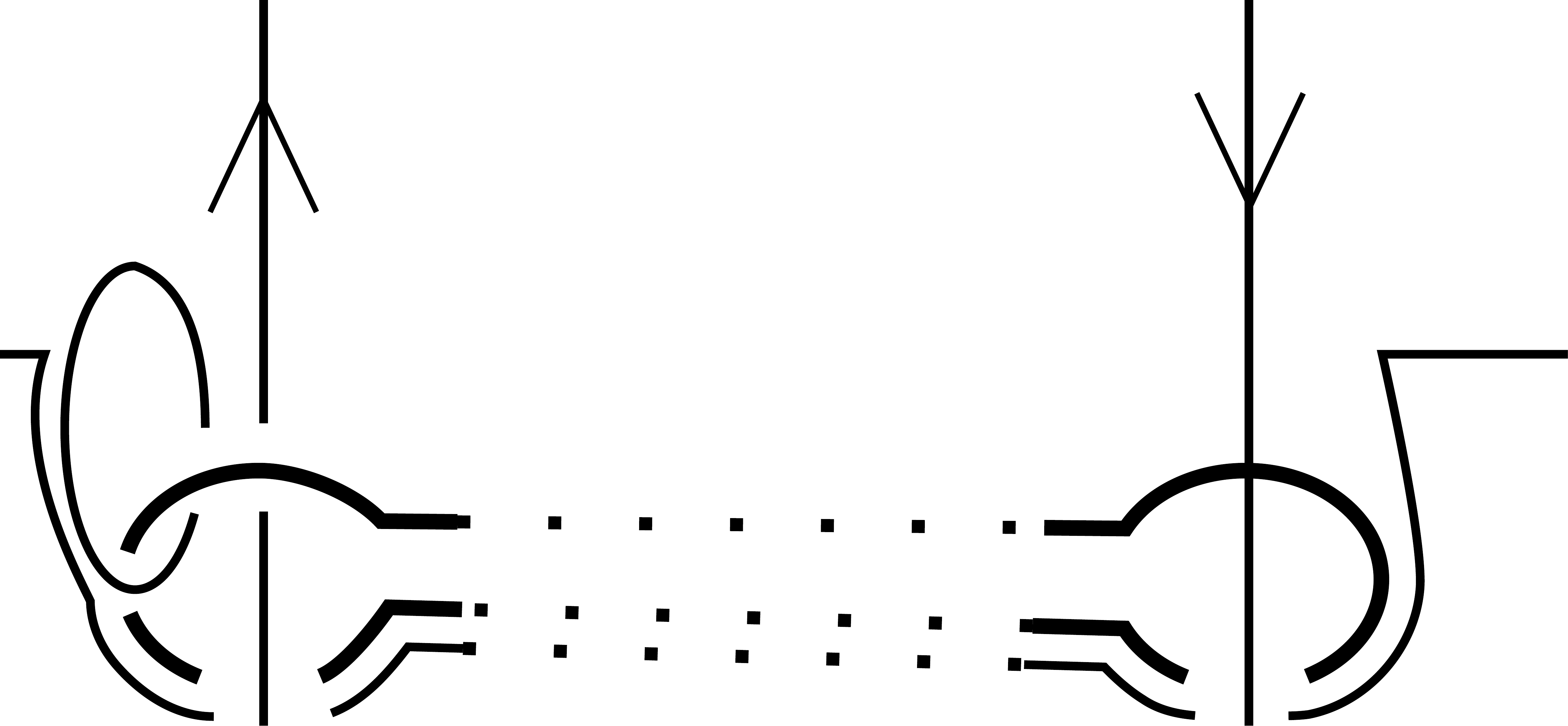}}
\put(235,0){(d)}

\put(280,15){\includegraphics[width=.25\textwidth,angle=90]{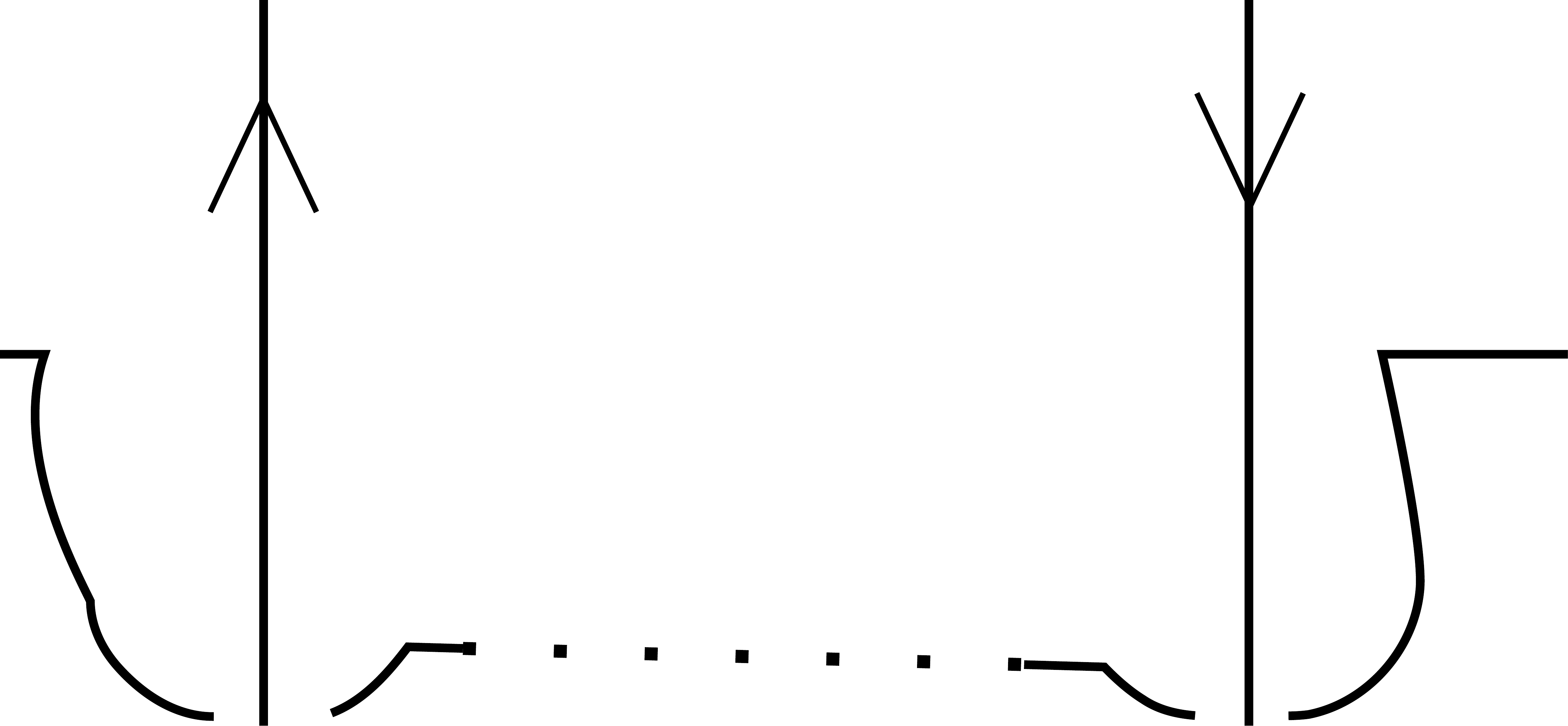}}
\put(305,0){(e)}

\put(350,15){\includegraphics[width=.25\textwidth,angle=90]{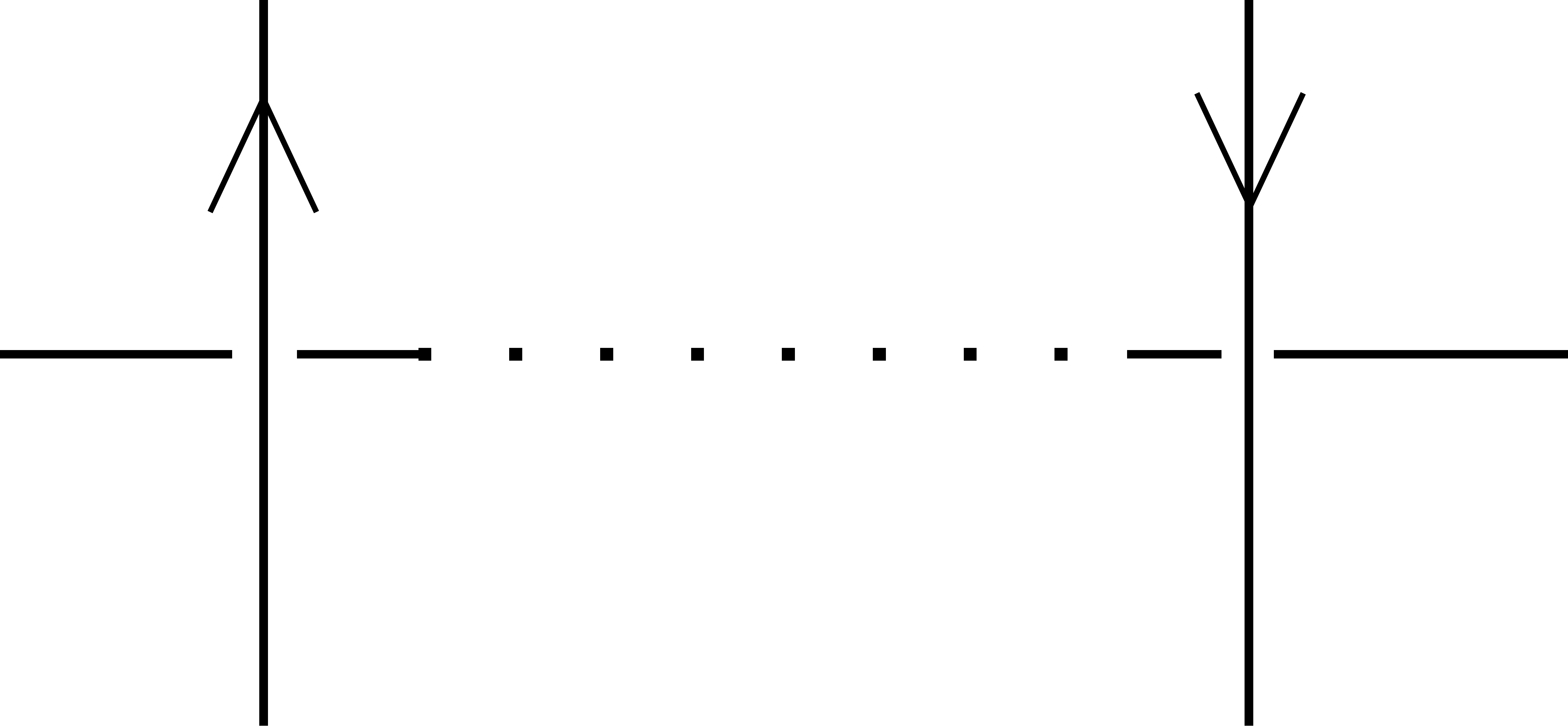}}
\put(375,0){(f)}
\end{picture}
\caption{Left to Right:  (a) A pair of crossings between $c_i$ and $\gamma_j$ with opposite signs.  (b) The result of performing $0$-framed surgery along a pair of curves, $\alpha$ and $\beta$.  (c) Sliding $c_i$ over the $0$-framing of $\alpha$. (d) After an isotopy, $\alpha$ is a helper circle for $\beta$. (e) Cancelling $\alpha$ and $\beta$. (f)  An isotopy produces the same diagram as (a) with two crossings changed.  }\label{fig:crossing change}
\end{figure}

  We may further isotope $C_{\alpha, \beta}$ in the complement of $\Gamma$ until there are no crossings between $C_{\alpha, \beta}$ and $\Gamma$. Thus, $C_{\alpha, \beta}\subseteq M_{\alpha,\beta}$ is contained in a 3-ball.  Since $U_{\alpha, \beta}$,  the image of $U$ in this surgery,  is a regular neighborhood of $C_{\alpha, \beta}$, this completes the proof the proposition.

\end{proof}

Given Propositions \ref{prop: in a ball}, \ref{prop: solvable surgery}, \ref{prop:bigger hbody}, and \ref{prop:0-surg for hbody} we are ready to prove Theorem \ref{thm:mainKnot}.

\begin{proof}[Proof of Theorem \ref{thm:mainKnot}]
Let $(M,L)$ be a $\mu$-component link in a homology sphere and consider any any $n\in \Z_{\ge 0}$.  We must construct a link $L'$ in $S^3$ such that $(M,L)$ is $n$-solvably concordant to $(S^3, L')$.

 Let $U_0\subseteq M$ be the genus $\mu$ handlebody constructed by starting with tubular neighborhoods of the components of $L$ and tubing them together.  Since $U_0$ is a handlebody, we may iteratively apply Proposition \ref{prop:bigger hbody} to build a sequence of handlebodies $L\subseteq U_0\subseteq U_1\subseteq U_2\subseteq \dots\subseteq U_{n}\subseteq U_{n+1}\subseteq M$ such that for all $i$, the inclusion induced map, $H_1(M-U_{i+1}) \to H_1(M-U_i)$ is the $0$-homomorphism.  Thus, $\pi_1(M-U_{i+1})\subseteq  \pi_1(M-U_i)^{(1)}$.  The functoriality of the derived series implies that $\pi_1(M-U_n)\subseteq \pi_1(M-L)^{(n)}$.  
 
 By Proposition \ref{prop:0-surg for hbody} there exist curves $\alpha_1,\beta_1\dots, \alpha_k,\beta_k$ in $M-U_{n+1}$ such that
\begin{enumerate}
\item  $\lk(\alpha_i, \alpha_j) =\lk(\alpha_i, \beta_j) = \lk(\beta_i, \beta_j) = 0$ for all $i\neq j$ 
\item $\lk(\alpha_i, \beta_i) = 1$ for all $i$
\item There exists a 3-ball in $M_{\alpha, \beta}$ which contains the image of $U_{n+1}$.
\end{enumerate}

Let $L_{\alpha, \beta}$ be the image of $L$ in the result of this surgery.  Then $L_{\alpha,\beta}$ is contained in the image of $U_{n+1}$ in $M_{\alpha, \beta}$, which in turn is contained in a 3-ball.  Thus, Proposition \ref{prop: in a ball} implies that $L_{\alpha, \beta}$ is homology-concordant to some link $L'$ in $S^3$.  It remains only to verify that $\{\alpha_i, \beta_i:i=1,\dots, k\}$ is a collection of $n$-solvable surgery curves for $(M,L)$, as in Definition \ref{defn: solvable surgery}.   Proposition \ref{prop: solvable surgery} will then  conclude that $(M,L)$ is $n$-solvably concordant to $(M_{\alpha, \beta}, L_{\alpha, \beta})$.  

By assumption $\alpha_1, \beta_1,\dots, \alpha_k,\beta_k$ lie in $M-U_{n+1}$, and so are nullhomologous in $M-U_n$.  Thus, there exist embedded surfaces $A_i$ and $B_i$ in $M-U_n$ bounded by $\alpha_i$ and $\beta_i$.  Putting them in general position we may assume that $A_i$ and $B_i$ intersect any $\alpha_j$ and $\beta_j$ transversely in a finite number of points.   Since $\lk(\alpha_i, \beta_i) = 1$, the algebraic count of intersections points between $A_i$ and $\beta_i$ is $1$.  By increasing the genus of $A_i$ we may eliminate cancelling intersection points so that $A_i$ intersects $\beta_i$ transversely in a single point.  
The exact same argument allows us to assume that $B_i$ intersects $\alpha_i$ transversely in a single point, and that $A_i\cap \alpha_j = A_i\cap \beta_j = B_i\cap \alpha_j = B_i\cap \beta_j = \emptyset$ for all $i\neq j$.  These surfaces live in $M-U_n$ and $\pi_1(M-U_n)\subseteq \pi_1(M-L)^{(n)}$.  Thus, $\{\alpha_i, \beta_i:i=1,\dots, k\}$ satisifes Definition \ref{defn: solvable surgery} and so Proposition \ref{prop: solvable surgery} concludes that $(M,L)$ is $n$-solvably concordant to $(M_{\alpha, \beta}, L_{\alpha, \beta})$.  Since $(M_{\alpha, \beta}, L_{\alpha, \beta})$ is homology concordant to $(S^3, L')$ we conclude that $(M,L) \simeq_n (S^3, L')$, completing the proof of the theorem.
\end{proof}

\section{Application: bijective satellite operators.}\label{sect:app}

A {pattern} $P$ is a knot embedded in the solid torus $V=S^1\times D^2$.  Patterns act on  knots in $S^3$ via the satellite construction as follows.  Given a knot $K$ in $S^3$ and a pattern $P\subseteq V$, glue together $E(K)$ and $V$ so that the meridian of $K$ is identified with the meridian of $V$ and the $0$-framed longitude of $K$ is identified with the longitude  of $V$.  $P(K)$ is the image of $P$ in this construction.  See for example \cite[Section 4D]{Rolfsen}.
A pattern $P$ which is homologous to $w$ times the prefered generator of $H_1(V)=\Z$ is said to have winding number $w$.  

%

  The rule $K\mapsto P(K)$ induces well defined maps $P:\C\to \C$ and $P:\C_{\smooth}\to\C_{\smooth}$.   The goal of this section is to ask when $P:\C\to \C$ is a bijection.  An easy argument based on the Levine-Tristram signature \cite[Proposition 3.1]{DaRa2017} reveals that no pattern of winding number other than $\pm1$ has any hope of giving a surjection on $\C$ or $\C_{\smooth}$.   Winding number $\pm1$ patterns are more subtle.  In \cite{Levine2016} a winding number $1$ pattern is produced which does not give a surjection on $\C_{\smooth}$.     This section can be thought of as evidence that no such pattern exists for topological concordance, so that every $P:\C\to \C$ is a bijection.  In proving Theorem \ref{thm:application} we see that every winding number $\pm1$ satellite operator acts bijectively on $\C/\F_n$ for all $n\in \N$.

A set of instructions identical to those used to define $P(K)$ can be used to produce a monoid structure on the set of patterns.  For details see \cite[Section 2]{DaRa2017}.  The satellite operation now becomes an action by this monoid.  The submonoid consisting of patterns with winding number $\pm1$ is denoted $\S_\Z$.  In the main theorem of \cite{DaRa2017} appears a group $\widehat \S$ which acts on $\widehat \C$ together with a monoid homomorphism $E:\S_\Z\to\widehat \S$ making the following diagram commute:
$$
   \begin{tikzcd}
\C_\Z \arrow{r}{P} \arrow[hook]{d}{\Psi}
& \C_\Z \arrow[hook]{d}{\Psi} \\
\widehat{\C} \arrow[hook, two heads]{r}{E(P)}
& \widehat{\C}.
   \end{tikzcd}
   $$ 
    Recall that $\C_\Z = \C/\ker(\Psi)$ is the integral knot concordance group.  Since $E(P):\widehat\C\to \widehat\C$ comes from a  group action, it is a bijection.  An easy diagram chase now reveals  that  $P:\C_\Z\to \C_\Z$ is injective.    Indeed, if $\Psi$ were surjective then $P$ would be as well.  
    
  We call the action of $\widehat\S$ on $\widehat\C$ the \emph{generalized satellite construction}.  It is well known that the map $P:\C_\Z\to \C_\Z$ passes to a well defined map $P:\C/\F_n\to \C/\F_n$. The main technical result of this section is that the same is true of the generalized satellite construction.  
    
    \begin{proposition}\label{prop:application}
    If $Q\in \widehat{\S}$ then the generalized satellite construction $Q:\widehat\C\to \widehat\C$ gives a well defined map $Q:\widehat\C/\widehat\F_n\to \widehat\C/\widehat\F_n$.
    \end{proposition}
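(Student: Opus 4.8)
The plan is to reduce the claim to the implication
$$(M,K)\simeq_n (N,J)\ \Longrightarrow\ Q(M,K)\simeq_n Q(N,J),$$
which suffices because, by Remark~\ref{rem:compatible}, $\widehat\C/\widehat\F_n=\widehat\C/\simeq_n$, so that descent of the given map $Q\colon\widehat\C\to\widehat\C$ to this quotient is exactly the assertion that $Q$ carries $\simeq_n$-equivalent pairs to $\simeq_n$-equivalent pairs. Fix a representative $(V,P)$ of $Q$, with $V$ a homology solid torus and $P\subseteq V$ a winding number $\pm1$ pattern, so that $Q(M,K)=(M',K')$ with $M'=(M\setminus\nu(K))\cup_\phi V$ and $K'$ the image of $P$, where $\phi$ is the identification used in the satellite construction. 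Given an $n$-solvable concordance $(W,C)$ from $(M,K)$ to $(N,J)$, write $E(C)=W\setminus\nu(C)$; for a $0$-framed tubular neighborhood of the annulus $C$, the part of $\partial\nu(C)$ not on $\partial W$ is a product $T^2\times[0,1]$, and I set
$$\widehat W\ :=\ E(C)\ \cup_{\phi\times\mathrm{id}}\ \bigl(V\times[0,1]\bigr),\qquad \widehat C\ :=\ P\times[0,1],$$
with $\widehat C$ pushed into the interior of $V\times[0,1]$ so as to miss the gluing region. Writing $(N',J')=Q(N,J)$, a check of the boundary identifications shows $\partial\widehat W=M'\sqcup\overline{N'}$ and $\partial\widehat C=K'\sqcup r(J')$, so it remains to verify the four conditions of Definition~\ref{defn:n-solvableEquiv} for $(\widehat W,\widehat C)$.

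Conditions (1) and (2) and the $H_1$-condition are quick. Condition~(2) is immediate, since $\widehat C$ avoids the gluing region. For the spin condition: $W$ is spin, so $E(C)$ inherits a spin structure, $V\times[0,1]$ is parallelizable, and by the standard $0$-framing conventions of the satellite construction one can choose the spin structure on $V\times[0,1]$ to agree with that of $E(C)$ on $T^2\times[0,1]$. For $H_1(\widehat W)=0$, run Mayer--Vietoris on $\widehat W=E(C)\cup(V\times[0,1])$ along $\partial V\times[0,1]\simeq T^2$: since $W$ is an $H_1$-cobordism, $H_1(E(C))\cong\Z$ is generated by the meridian of $C$ and $H_1(V)\cong\Z$ by the core, and the gluing sends the meridian of $V$ to the former generator and the longitude of $V$ to the latter; because $P$ has winding number $\pm1$, the resulting map $H_1(T^2)\to H_1(E(C))\oplus H_1(V)$ is an isomorphism of free abelian groups, whence $H_1(\widehat W)=0$. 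This is the first place the winding number hypothesis is used.

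The substance is in conditions (3) and (4). The geometric input, valid because $P$ has winding number $\pm1$, is that the inclusion $\partial V\hookrightarrow E_V(P)$ of the outer boundary of the pattern exterior $E_V(P)=V\setminus\nu(P)$ is an isomorphism on integral homology (a short Mayer--Vietoris/``half lives, half dies'' computation; cf.~\cite{DaRa2017}). Hence $E(C)\hookrightarrow E(\widehat C)=E(C)\cup_{\partial V\times[0,1]}\bigl(E_V(P)\times[0,1]\bigr)$ is an integral homology isomorphism, and a further Mayer--Vietoris computation---using $H_2(V)=0$, $H_2(E_V(P))\cong\Z$, and that the glued torus bounds in $E(C)$---shows that the inclusion-induced maps
$$H_2(W)\ \xleftarrow{\ \cong\ }\ H_2(E(C))\ \xrightarrow{\ \cong\ }\ H_2(E(\widehat C))\ \xrightarrow{\ \cong\ }\ H_2(\widehat W)$$
are all isomorphisms of free abelian groups of rank $2k$. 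Condition~(4) for $(\widehat W,\widehat C)$ then follows from condition~(4) for $(W,C)$ by chasing the images of the $x_i,y_i$ through this diagram. For condition~(3), let $x_1,\dots,x_k,y_1,\dots,y_k\in H_2(E(C);\Z[G/G^{(n)}])$, $G=\pi_1(E(C))$, be the $n$-Lagrangians and $n$-duals of $(W,C)$. The inclusion $E(C)\hookrightarrow E(\widehat C)$ induces $G\to\widehat G:=\pi_1(E(\widehat C))$, hence $G/G^{(n)}\to\widehat G/\widehat G^{(n)}$ and a ring homomorphism $\Z[G/G^{(n)}]\to\Z[\widehat G/\widehat G^{(n)}]$; together the ring homomorphism and the inclusion induce a map $H_2(E(C);\Z[G/G^{(n)}])\to H_2(E(\widehat C);\Z[\widehat G/\widehat G^{(n)}])$, and I let $x_i',y_i'$ be the images of $x_i,y_i$. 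Naturality of the equivariant intersection form---first under the change of coefficient group, under which $\delta_{ij}\mapsto\delta_{ij}$ and $0\mapsto0$, then under the inclusion of the codimension-zero submanifold $E(C)$ into $E(\widehat C)$---gives $\lambda_n(x_i',y_j')=\delta_{ij}$ and $\lambda_n(x_i',x_j')=\lambda_n(y_i',y_j')=0$. Thus $(\widehat W,\widehat C)$ is an $n$-solvable concordance from $Q(M,K)$ to $Q(N,J)$, completing the reduction.

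I expect the main obstacle to be the coefficient bookkeeping in condition~(3): the homomorphism $G/G^{(n)}\to\widehat G/\widehat G^{(n)}$ is generally not injective---indeed $G\to\widehat G$ need not be surjective, since $\pi_1(E_V(P))$ need not be generated by $\pi_1(\partial V)$ (e.g.\ for the Mazur pattern)---so one must verify with care that the hyperbolic data genuinely transports along the chosen ring map and the codimension-zero inclusion, keeping the covers and twisted coefficient systems straight rather than relying on any identification of the relevant twisted homology groups; matching the spin structures across the gluing is a second, routine, point to check. Granting these, every $(M,K)\simeq_n(N,J)$ satisfies $Q(M,K)\simeq_n Q(N,J)$, and so $Q$ descends to a well-defined map $Q\colon\widehat\C/\widehat\F_n\to\widehat\C/\widehat\F_n$.
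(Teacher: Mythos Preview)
Your proof is correct and follows essentially the same route as the paper's: both build the new cobordism by gluing the original $E(C)$ to a product on the generalized pattern, identify $E(\widehat C)$ with $E(C)\cup(E_V(P)\times[0,1])$ (which is the paper's $E(C)\cup(X\times[0,1])$ since $X=E_V(P)$), and then push the $n$-Lagrangians and duals forward via the inclusion using functoriality of the derived series and of the equivariant intersection form. Your presentation differs only cosmetically in representing $Q\in\widehat\S$ as a knot $P$ in a homology solid torus $V$ rather than as a marked homology cylinder $(X,i_+,i_-)$, and you are more explicit than the paper about the spin and $H_1$ checks; one small remark is that the $H_1(\widehat W)=0$ computation does not actually use the winding number of $P$ (only the gluing conventions built into the $\widehat\S$ action), whereas the winding-number/$\pm\Id$ hypothesis genuinely enters, as you note later, in showing $H_*(E(C))\to H_*(E(\widehat C))$ is an isomorphism.
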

    
    \begin{proof}[proof of Theorem \ref{thm:application} assuming Proposition \ref{prop:application}]It is an immediate consequence of Proposition \ref{prop:application} that the group action of  $\widehat \S$ on $\widehat \C$ passes to a group action on $\widehat \C/\widehat\F_n$.  Let $P$ be a winding number $\pm1$ pattern and consider the following commutative diagram:
    $$
   \begin{tikzcd}
\C/\F_n \arrow{r}{P} \arrow[hook, two heads]{d}{\Psi}
&\C/\F_n \arrow[hook, two heads]{d}{\Psi} \\
\widehat{\C}/\widehat\F_n \arrow[hook, two heads]{r}{E(P)}
& \widehat{\C}/\widehat\F_n.
   \end{tikzcd}
   $$ 
   Here $\Psi:\C/\F_n\to \widehat\C/\widehat\F_n$ is bijective by Theorem \ref{thm:mainKnot} and $E(P):\widehat{\C}/\widehat\F_n\to \widehat{\C}/\widehat\F_n$ is bijective since it comes from a group action.  It follows immediately  that $P:\C/\F_n\to \C/\F_n$ is bijective, proving Theorem \ref{thm:application}. 
   \end{proof}

Before we prove Proposition \ref{prop:application} we must recall the definition of $\widehat \S$ and its action on $\widehat\C$.  

\begin{definition}[Definition 2.7 of \cite{DaRa2017} with $R=\Z$]
A \emph{generalized pattern} is a triple $(X,i_+,i_-)$ where 
\begin{enumerate}
\item $X$ is an oriented, compact, connected 3--manifold.
\item For $\epsilon\in \{+,-\}$, $i_\epsilon:S^1\times S^1\to \bdry X$ is an embedding and $\bdry X$ = $i_+(S^1\times S^1) \sqcup i_-(S^1\times S^1)$.
\item  $i_+$ is orientation-preserving and $i_-$ is orientation-reversing.
\item $(i_\epsilon)_*:H_*(S^1\times S^1)\to H_*(X)$ is an isomorphism.
\item $(i_+)_*^{-1}\circ(i_-)_*:H_1(S^1\times S^1)\to H_1(S^1\times S^1)$ is $\pm \Id$ where $\Id$ is the identity homomorphism.
\end{enumerate}
\end{definition} 

Notice that the fist four conditions above give a homology cylinder over $S^1\times S^1$.   The quotient of the set of homology cylinders by homology cobordism is a group introduced by J.~Levine and Garoufalidis in \cite{Le8, GarL1}.   This group is denoted by $\mathcal{H}$.  The subgroup of $\mathcal H$ consisting of generalized patterns is denoted $\mathcal{\widehat S}$.  Let $P$ be a winding number $\pm1$ pattern.  Let $i_+:S^1\times S^1 \to \bdry (S^1\times D^2)$ be the natural inclusion, and $i_-:S^1\times S^1\to \bdry \nu(P)$ be the map sending $S^1\times\{pt\}$ to the preferred longitude of $P$ and $\{pt\}\times S^1$ to the meridian.  Then $(E(P), i_+, i_-)$ forms a generalized pattern \cite[Proposition 2.8]{DaRa2017}. 

Next we recall the action of $\widehat\S$ on $\widehat \C$ as presented in \cite[Section 2.5]{DaRa2017}.  Let $(M,K)$ be a knot in a homology sphere and $(X, i_+, i_-)$ be a generalized pattern.  The meridian and longitude of $S^1\times S^1$ are given by $\ell=S^1\times \{pt\}$ and $m = \{pt\}\times S^1$.  Then $(X, i_+, i_-)\cdot (M,K) = (M', K')$ where $M'$ is the $3$-manifold defined by gluing together $E(K)$, $X$, and $S^1\times D^2$ as follows:  First glue $i_+(S^1\times S^1)\subseteq \bdry X$ to $\bdry E(K)$ so that the meridian of $K$ is identified to $i_+(m)$ and the longitude of $K$ is identified to $i_+(\ell)$.  Next glue $i_-(S^1\times S^1)\subseteq \bdry X$ to $\bdry (S^1\times D^2)$ so that $i_-(m)$ is identified with $\{pt\}\times \bdry D^2$ and $i_-(\ell)$ is identified with $S^1\times \{pt\}$.  Using that $(X, i_+, i_-)\in \widehat S$, a direct Mayer-Veitoris argument may be used to show that that $M'$ is a homology sphere.   $K'\subseteq M'$ is the image of core of $S^1\times D^2$ in this construction.  It is checked in \cite[Proposition 2.14]{DaRa2017} that this is compatible with the classical satellite construction in that for any pattern $P$ and any knot $K$ in $S^3$, $E(P)\cdot(S^3,K) = (S^3,P(K))$.  Here equality means orientation preserving homeomorphism of pairs.

We are new ready to check that the generalized satellite construction is compatible with the solvable filtration.  The proof is inspired by the proof of \cite[Proposition 2.15]{DaRa2017}.

\begin{proof}[Proof of Proposition \ref{prop:application}]
Since the quotients $\widehat \C/\widehat \F_n$ and $\widehat \C/ \simeq_n$ are identical, it suffices to prove that the action is well defined on $\widehat \C/ \simeq_n$.  Let $(M,K)\simeq_n(N,J)$ and $(W,C)$ be an $n$-solvable concordance between $(M,K)$ and $(N,J)$.    Suppose that $Q = (X, i_+, i_-)\in \widehat S$.  Set $(M',K') = (X, i_+, i_-)\cdot (M,K)$ and $(N',J') = (X, i_+, i_-)\cdot (N,J)$.    We must build an $n$-solvable concordance $(W',C')$ between $(M',K')$ and $(N',J')$.  Let $E(C)$ be the exterior of $C$.  The gluing instructions used to build $N'$ and $M'$ extend to gluing instructions for a 4--manifold
\begin{eqnarray*}
W'&=&(S^1\times D^2\times[0,1])\cup( X\times[0,1])\cup E(C)
\end{eqnarray*}
in which we see a concordance, $C' = S^1\times\{pt\}\times[0,1]\subseteq S^1\times D^2\times[0,1]\subseteq W'$ from $K'$ to $J'$.  Notice that the copy of $S^1\times D^2\times[0,1]$ above is a tubular neighborhood of $C'$ so that $E(C') = X\times[0,1]\cup E(C)$. 

 A quick Mayer-Veitoris argument reveals that the inclusion induced map $H_*(E(C))\to H_*(E(C'))$ is an isomorphism and that there exists an isomorphism $H_*(W)\to H_*(W')$ making the following diagram commute
\begin{equation}\label{diag:comm}
\begin{tikzcd}
H_*(E(C)) \arrow["\cong",r]\arrow[d] &H_*(E(C'))\arrow[d]\\
H_*(W) \arrow["\cong",r]& H_*(W').
   \end{tikzcd}
\end{equation}
  Let $G=\pi_1(E(C))/\pi_1(E(C))^{(n)}$,  $G' = \pi_1(E(C'))/\pi_1(E(C'))^{(n)}$ and $\widetilde{E(C)}_n$ and $\widetilde{E(C')}_n$ be the induced covers.   By the functoriality of the derived series, the inclusion induced map $\iota_*:\pi_1(E(C))\to \pi_1(E(C'))$ satisfies $\iota_*[\pi_1(E(C))^{(n)}]\subseteq \pi_1(E(C'))^{(n)}$ so that we get a lift
\begin{equation}   \label{diag:lift}
\begin{tikzcd}
\widetilde{E(C)}_n \arrow[dashed]{r}{\widetilde{\iota}} \arrow{d}
&\widetilde{E(C')}_n \arrow{d}\\
E(C) \arrow{r}{\iota}
& E(C').
   \end{tikzcd}
\end{equation}
Let $x_1,y_1, \dots, x_k, y_k\in H_2(E(C);\Z[G]) = H_2(\widetilde{E(C)}_n)$ be the classes guaranteed by Condition \pref{equiv classes} of Definition \ref{defn:n-solvableEquiv}.  Let $x_i'=\widetilde{\iota}_*(x_i)$, and $y_i'=\widetilde{\iota}_*(y_i)$.  By the functoriality of intersection forms, $\lambda_n^{E(C')}(x_i',y_j') = \iota_\sharp\left(\lambda^{E(C)}_n(x_i, y_j)\right) = \delta_{i,j}$, where $\iota_\sharp:\Z[G]\to \Z[G']$ is induced by $\iota_*$.  Similarly, $\lambda^{E(C')}_n(x_i',x_j') = \lambda^{E(C')}_n(y_i',y_j')=0$.

We make use of the commutativity of \pref{diag:comm} and \pref{diag:lift} to conclude that since the image $\{x_i,y_i\}$ in $H_2(E(C);\Z[G])\to H_2(E(C))\to H_2(W)$ gives a basis for $H_2(W)$, it follows that the image of $\{x_i',y_i'\}$ in $H_2(E(C');\Z[G'])\to H_2(E(C'))\to H_2(W')$ gives a basis for $H_2(W')$.  Thus, $(W',C')$ is an $n$-solvable concordance and $(X, i_+, i_-)\cdot (M,K)\simeq_n (X, i_+, i_-)\cdot (N,J)$.  

Finally we conclude that  $(M,K)\mapsto(X, i_+, i_-)\cdot (M,K)$ is well defined on $\C/\simeq_n$, completing the proof.
\end{proof}

\bibliographystyle{plain}

\bibliography{biblio}
\end{document}